\newtheorem{Theorem}{Theorem}[section] 
\newtheorem{Definition}{Definition}[section]
\newtheorem{Lemma}{Lemma}[section]
\newtheorem{Proposition}{Proposition}[section]
\newtheorem{Remark}{Remark}[section]
\newtheorem{Assumption}{Assumption}[section]
\numberwithin{equation}{section}
\newcommand{\eps}{\varepsilon}
\newcommand{\E}{\mathbb{E}}
\definecolor{RED}{rgb}{1,0,0}\definecolor{BLUE}{rgb}{0,0,1} 
\begin{document}
\title[Stochastic Perron's Method]{Stochastic Perron's method for Hamilton-Jacobi-Bellman equations 
}

\author{ Erhan Bayraktar}
\address{University of Michigan, Department of Mathematics, 530 Church Street, Ann Arbor, MI 48109.} \email{erhan@umich.edu.}
 \thanks{The research of E. Bayraktar was supported in part by the National Science Foundation under grants DMS 0955463 and DMS 1118673.}

 \author{Mihai S\^{\i}rbu}
 \address{University of Texas at Austin,
    Department of Mathematics, 1 University Station C1200, Austin, TX,
    78712.}  
    \email{sirbu@math.utexas.edu.} 
    \thanks{The research of
    M. S\^{\i}rbu was supported in part by the National Science
    Foundation under Grant
 DMS 1211988.} \thanks{Any opinions, findings, and conclusions or recommendations expressed in this material are those of the authors and do not necessarily reflect the views of the National Science Foundation. The authors would like to thank the anonymous referees for their careful reading and suggestions that led to an improvement of the paper. Special thanks go to Gordan \v Zitkovi\'c for his valuable comments.}
    
\date{\today}

\keywords{Perron's method, viscosity solutions, non-smooth verification, comparison principle}
  
\subjclass[2010] {Primary  49L20, 49L25, 60G46; Secondary 60H30, 35Q93, 35D40}

\begin{abstract}
We show that the value function of a stochastic control problem is the unique solution of the associated Hamilton-Jacobi-Bellman (HJB) equation, completely avoiding  the proof of the
  so-called dynamic programming principle (DPP).  Using Stochastic Perron's method we construct a super-solution lying below the value function and a sub-solution dominating it.
A comparison argument easily  closes the proof. The  program has the precise meaning of verification for viscosity-solutions, obtaining the DPP as a conclusion. It also immediately follows that the weak and strong formulations of the stochastic control problem have the same value. Using this method we also capture the possible face-lifting phenomenon in a straightforward manner.
\end{abstract}

\maketitle 
%
%
\section{Introduction}

Stochastic Perron's method was introduced in \cite{bs2012a} for linear problems, and adapted in \cite{bs2012b} to free-boundary problems associated to Dynkin games. In the present paper, we carry out  a similar line of ideas, but with significantly different technicalities, to the most important case of non-linear problems, namely that of  Hamilton-Jacobi-Bellman equations (HJB)  in stochastic control. The result presented here actually represents the original motivation to introduce the stochastic version of Perron's method.

The goal of the paper is

\begin{enumerate}

\item  to prove the general result stating that ``the value function of a control problem is the unique viscosity of the associated HJB",
\item {\it without} having to first go through the proof of the Dynamic Programming Principle (DPP) but  actually obtaining it as a by-product,
\item in an as much {\it  elementary} manner as possible.
\end{enumerate}
The motivation for such goal is described in detail in \cite{bs2012a} and \cite{bs2012b}. To summarize, the program described by (1) and (2) (and, implicitly (3)) amounts to a verification result for viscosity solutions of HJB's. 
Overall, we believe this to be a genuinely novel approach to stochastic control,   that provides a deeper understanding of the relation between controlled diffusions and (viscosity) solutions of HJB's.

In addition to being a new  method to treat a fundamental class of problems, we  believe the program carried out here has two notable features, which basically amount to achieving our goals above:
\begin{enumerate}
\item This is a direct/verification approach to dynamic programming ( similar to  \cite{swiech-1} or \cite{swiech-2}), in that it {\it first} finds/constructs  a solution to the HJB, {\it then} shows that such solution is the value function,  avoiding altogether the Dynamic Programming Principle. However, this is technically very different from   the verification approach in \cite{swiech-1} or \cite{swiech-2}, and  can be viewed  as a probabilistic counterpart to the classical approach.
\item  We believe it, indeed,  to be  more elementary  than  either going through the probabilistic proof of the DPP (which is often incomplete, as described in the recent paper \cite{claisse-talay-tan} where some important details are fixed)  and then having to prove comparison of viscosity solutions anyway   or through the analytical techniques in \cite{swiech-1} and \cite{swiech-2}. In particular, there is no need for us to use  ``conditional controls'' or canonical spaces, usually needed in the proof of the DPP. These arguments are          still needed even in the recent proof of Bouchard and Touzi of a weaker version of the DPP  \cite{wdpp}.  While measurable selection arguments  are  circumvented there through the weaker formulation, the Markov arguments mentioned above are still present. 
Our method consists  only in applying It\^o's formula along the smooth test functions for viscosity solutions, plus an elementary stopping argument. In addition, arguments of the same type as we use here (maybe even more complicated) have to be used {\it anyway} when one uses the weak DPP to prove that the value function is a viscosity solution. Also, we  avoid the technicalities related to approximation by convolution and the approximation of the state equation in \cite{swiech-1} and \cite{swiech-2}.\end{enumerate}
We present here a  fresh look at a classic problem, so some comments on the existing literature are needed.  We mention briefly only those works that are closest related, at the risk of omitting relevant but further ideas. We first start with some important work in stochastic control, which, in the same spirit as our paper,  avoids the DPP.

Since our result amounts to  verification without smoothness, it is  conceptually  closest to \cite{swiech-1} and \cite{swiech-2}. Using approximation by convolution of viscosity semi-solutions in the deterministic case  (\cite{swiech-1})  and then also approximating the state equation by non-degenerate diffusions in the stochastic framework of two-player games (\cite{swiech-2}), the author performs a verification argument arriving at similar conclusions (in different situations though).  The probability space needs to accommodate an additional Brownian motion in the stochastic case, and, as mentioned above, the technicalities are very different and more involved, compared to our approach. Overall, the two approaches have little, if anything, in common.

At a formal level, one of our main results, Theorem \ref{thm:mthm2}, looks very much like the main result in the seminal work of Fleming-Vermes  \cite{fleming-vermes} and \cite{fleming-vermes-2} (see also Remark \ref{rem:fleming-vermes}).
\footnote{We would like to thank Ioannis Karatzas and Mete Soner for pointing out the closely related work of Fleming and Vermes.} More precisely, while the authors in  \cite{fleming-vermes} and \cite{fleming-vermes-2} show that the value function is the infimum of classical super-solutions, we show that, the value function is below the infimum of stochastic super-solutions, which is a viscosity sub-solution. 
While appearing stronger than our Theorem \ref{thm:mthm2} (considered by itself, without the other main result Theorem \ref{thm:mthm1}), the notable result in Fleming-Vermes has two features:
\begin{enumerate}
\item It contains a sophisticated approximation/separation  argument  used on top  of re-stating the  optimization problem as an infinite dimensional convex program,
\item It still uses the very definition of the value function,
\item By itself, is  not enough to show the value function is a viscosity sub-solution. Even if one does not mind the complicated approximation arguments, our Theorem \ref{thm:mthm2} is still needed on top of the very strong results in \cite{fleming-vermes} and \cite{fleming-vermes-2} to get such a conclusion. Even combining Fleming-Vermes with the Perron's method in Ishii \cite{ishii} would not yield this: the infimum over viscosity super-solutions may go below the value function, unless we now a {\it a-priori}    that the value function is a viscosity sub-solution, and we also have a comparison result (needed even for the viscosity version of Perron in \cite{ishii}). 
A sub-approximation counterpart to the work of Fleming-Vermes could close the argument, but this would still have a very different flavor than our work, since it uses, once again, the representation of the value function.
Actually, the recent papers \cite{tz1, tz2, tz3} carry along these lines, for path-dependent HJB's.
\end{enumerate}
 If one attempts to only use the Perron's method in Ishii \cite{ishii} to construct viscosity solutions, the same obvious obstacle described in relation to Fleming-Vermes arises: without additional knowledge on the properties of  value function, it does not compare with the output of Perron's method.


It should be also mentioned how our result compares to   other existing results about verification for viscosity solutions of HJB's, namely \cite{xyz}. The result in \cite{xyz} starts from the fact that the value function is the unique viscosity solution, and, using this piece of information, synthesizes the optimal control (if one exists) in terms of the generalized derivatives of the viscosity solution/value function. Our result plays a role  {\it before} the synthesis described in \cite{xyz}, and proves exactly that the value function is the unique viscosity solution, {\it without} resorting to the use of DPP.   In other words, our work addresses a different question than the one addressed in  \cite{xyz} (but quite similar to \cite{swiech-1} and \cite{swiech-2}).

The rest of the paper is organized as follows: In Section~\ref{sec:set-up}, we present the basic setup of the stochastic control problem, introduce the related HJB and the terminal condition. Moreover, we state our assumptions on the Hamiltonian. In Section~\ref{sec:subsoln}, we consider the strong formulation of the stochastic control problem and introduce the class of stochastic sub-solutions via which we construct a lower bound on the value function which is a viscosity super-solution. In Section~\ref{sec:ssuper}, we introduce the weak formulation of the stochastic control problem and introduce the class of stochastic super-solutions using which we construct a viscosity sub-solution to the HJB equation. Finally, in Section~\ref{sec:verification}, we verify that both value functions, in the weak and the strong formulation, equal the unique viscosity solution using comparison.

\section{Setup}\label{sec:set-up}
Let $U$ be a closed subset of $\mathbb{R}^k$ (the control space) and 
$\mathcal{O}$ an open subset of $ \mathbb{R}^d$ (the state space).
Let $b:[0,T] \times \mathcal{O} \times U \to \mathbb{R}^d$ and $\sigma:[0,T]\times\mathcal{O} \times U \to \mathbb{M}_{d,d'}$ be two measurable functions. We consider the controlled diffusion
\begin{equation}\label{eq:SDE}
 dX_t=b(t,X_t,u_t)dt+\sigma (t, X_t,u_t)dW_t, \ \ X\in \mathcal{O}.\\
 \end{equation}
We assume that the state lives in the open domain $\mathcal{O}\subset \mathbb{R}^d$, to include the treatment of  utility maximization models for positive wealth, which is popular in mathematical finance.
 Given a measurable function $g:\mathcal{O}\rightarrow \mathbb{R}$, our goal is to maximize the expected payoff received at a fixed time-horizon $T>0$ using progressively measurable processes $u$ taking values in $U$. Informally, we want to study the optimization problem 
$$\sup _{u}\mathbb{E}[g(X^u_T)],\ \ X_0=x\in \mathcal{O}.$$
\begin{Remark}
We choose only to maximize terminal payoffs, just to keep the notation simpler. In the literature, this is known as the Mayer formulation of stochastic control problems. The Bolza problem, which contains a running payoff as well, can be treated in an identical manner, with some additional notation.
\end{Remark}

One associates the following Hamiltonian to this problem:
\[
H(t,x,p,M):=\sup_{u \in U}\left[b(x,u)\cdot p+\frac{1}{2}Tr(\sigma(x,u)\sigma(x,u)^T M)\right],\ \ 0\leq t\leq T, \; x\in \mathcal{O}.
\] 
We make the following assumption on the Hamiltonian:
\begin{Assumption}\label{as:bnd}
Let us denote the domain of $H$ by
\[
\text{dom}(H):=\{(t,x,p,M) \in [0,T]\times \mathcal{O}\times \mathbb{R}^d \times \mathcal{S}_d: H(t,x,p,M)<\infty\}.
\]
We will assume that $H$ is continuous in the interior of dom$(H)$. Moreover, we will assume that
there exists a continuous function $G:[0,T] \times \mathcal{O} \times \mathbb{R}^d \times \mathcal{M}_d \to \mathbb{R}$ such that
\begin{enumerate}
\item $H(t,x,p,M)<\infty \implies G(t,x,p,M) \geq 0$,
\item $G(t,x,p,M)>0 \implies H(t,x,p,M)<\infty$.
\end{enumerate}
\end{Assumption} 
 \begin{Remark}
Our assumption above on the Hamiltonian $H$ differs from that of \cite{MR2533355}, which assumes that the domain of $H$ is closed. This latter assumption is well-suited for analyzing super-replication problems with volatility uncertainty but excludes the utility maximization problems. For example, our assumption works out well  for utility maximization problems, where
$\mathcal{O}=(0,\infty)$ and $G(t,x,p,M)=-M$.  Of course, one may ask why not simply choose $G=e^{-H}$? This is because, in general, $H$ is not jointly continuous everywhere as an extended value function. For example, in the case of one-dimensional utility maximization, where $H(t,x,p,M)=-p /2M^2$ for $M<0$, one can see that the Hamiltonian is not continuous at $(p,M)=(0,0)$, even if we view it as extended-valued. If $H$ is continuous everywhere, as an extended-valued mapping, then we can, indeed, choose $G=e^{-H}$. However, this is usually not the case.
 \end{Remark}
 
 Using the Stochastic Perron's Method, our goal is to show that, when a comparison principle is satisfied, the value function is, immediately, the unique viscosity solution of
  \begin{equation}\label{eq:PDE}
 \min\{-v_t(t,x)-H(t,x,v_x(t,x),v_{xx}(t,x)), G(t,x,v_x(t,x), v_{xx}(t,x))\}=0,
 \end{equation}
for $(t,x) \in [0,T)\times \mathcal{O}$, with the terminal condition
\begin{equation}\label{eq:bndry}
\min[v(T,x)-g(x),G(T,x,v_x(T,x), v_{xx}(T,x))]=0, \quad \text{on} \quad \mathcal{O},
\end{equation}
{\bf without} having to prove the dynamic programming principle.
\begin{Remark} One may question why we do not impose any kind of boundary conditions on $\partial \mathcal{O}$. This is because, as we can see from the assumptions below, we choose $\mathcal{O}$ as a natural domain, so that the controlled state  process $X$  never makes it to the boundary.
\end{Remark}
  \section{Stochastic sub-solutions}\label{sec:subsoln}
 
 In this section we will consider the so-called ``strong formulation'' of the stochastic control problem. 

The main goal of the paper is to outline how the  Stochastic Perron's Method in 
\cite{bs2012a} and \cite{bs2012b} can be used for the more important problem of Hamilton-Jacobi-Bellman equations. Having such goal in mind, 
but wanting to keep the presentation simpler, we make {\it quite restrictive  assumptions,  without losing the very interesting case when a boundary layer is present}. However, the restrictive assumptions we make are actually present in the important examples we have in mind. Our analysis can be  carried through under weaker assumptions, but, as it is customary in stochastic control, this would have to be done on a case by case basis, adapting the method to the specific optimization problem.
This is particularly important as far as admissibility is concerned.

Let $(\Omega, \mathcal{F}, \mathbb{P})$ be a probability space supporting an $\mathbb{R}^{d'}$-valued Brownian motion. Given $T$ let $\mathbb{F}:=\{\mathcal{F}_t, \;0 \leq t \leq T\}$ denote the completion of the natural filtration of this Brownian motion. (Note that $\mathbb{F}$ satisfies the usual conditions.)

\begin{Assumption}[{\bf State Equation}]\label{ass:stsolcoef}
 For any $(t, u) \in [0,T] \times U$ and $x,y \in \mathbb{R}^d$ we have
\begin{equation} 
\begin{split}
|b(t,0,u)|+|\sigma(t,0,u)| & \leq C (1+|u|), 
\\ |b(t,x,u)-b(t,y,u)|+|\sigma(t,x,u)-\sigma(t,y,u)| &\leq L(|u|) |x-y|,
\end{split}
\end{equation}
for some constant $C$ and some non-decreasing function $L:[0,\infty)\rightarrow [0, \infty)$.
\end{Assumption}
In what follows, we will work with controls and solutions defined on stochastic intervals. It is well know that, for deterministic intervals, one can choose integrands which are progressively measurable, optional or predictable, as they are equal up to equivalent classes. We choose here to work predictable controls, which are both the most general (i.e. work even for jump-diffusions) and best suited to handle joint-measurability in $(t,\omega)$ that is required on stochastic intervals.

 Admissibility (i.e. bounds or  integrability) is another very important issue, and we choose here a very small class of admissible process, namely bounded controls, but the bound is not fixed a-priori (unless the control space $U$ is itself compact). This allows to capture the full behavior of the value function, i.e. face lifting phenomenon, but the optimal control may not be admissible, if such  a control exists. This choice of admissible controls is the same as in Section 6 of Krylov \cite{MR2723141}, for the case of unbounded controls.
\begin{Definition}\label{def:admissible} Let $0\leq \tau \leq \rho \leq T$ be stopping times. By $\mathcal{U}_{\tau,\rho}$ we  denote the collection of 
predictable processes $u:(\tau, \rho]\rightarrow U$, by which we mean that the joint map
$$(0, T]\times \Omega \ni (t,\omega) \rightarrow u_t(\omega)\times 1_{[\tau (\omega )< t\leq \rho (\omega))}$$ is predictable with respect to the filtration $\mathbb{F}$ and which are uniformly bounded, i.e. there exists a positive constant $ 0\leq B(u)<\infty$ such that 
\[
  \|u\|:=\sup _{ \tau (\omega)\leq t<\rho (\omega)}|u_t(\omega)| \leq B(u).
\]

\end{Definition}
 Our definition of admissible control is very restrictive, in order to be able to   deal simultaneously with a  reasonably large class of problems. Of course, with this definition one does not expect an admissible optimal control to exist. However, if particular problems are considered, the definition of admissibility can be changed to  a larger class that does contain the optimal control (if such exists). For example,
\begin{enumerate}
\item in the case of utility maximization,  controls should only be locally integrable, and admissibility is a state constraint, namely that the wealth is non-negative, 
\item in the case of classical quadratic-type energy minimization, controls should be square integrable.
\end{enumerate}
Our proofs work verbatim in these particular cases.
 
\begin{Remark}
Assumption \ref{ass:stsolcoef}  on the controlled SDE, together with the Definition \ref{def:admissible} ensures that
 there is always a unique strong (adapted to $\mathcal{F}_t$) solution to the controlled SDE up to an explosion time. The additional Assumption  \ref{ass:domain} below actually means that there is never an explosion (for bounded controls). This is always the case if $\mathcal{O}=\mathbb{R}^d$, or in the case of utility maximization, if the control is the proportion of stocks held.
\end{Remark}


\begin{Assumption}[{\bf Natural Domain}]\label{ass:domain}
For any stopping times $\tau \leq \rho $ and any initial condition $\xi \in \mathcal{F}_{\tau}$ satisfying 
$\mathbb{P}(\xi \in \mathcal{O})=1$,  if $u\in \mathcal{U}_{\tau,\rho}$, the  unique strong solution $X^{u;\tau , \xi}$ of the SDE
\begin{equation}
\label{state-eq-full}
\left \{
\begin{array}{ll}
 dX_t=b(t,X_t,u_t)dt+\sigma (t, X_t,u_t)dW_t,\ \tau \leq t\leq \rho,\\
X_{\tau}=\xi

\end{array}
\right.
\end{equation}
does not explode, i.e.
$\mathbb{P}(X^{u;\tau, \xi}_t\in \mathcal{O},\ \ \tau \leq t\leq \rho)=1.$
\end{Assumption}

We  denote $\mathcal{U}_{0,T}$ by $\mathcal{U}$. Then let us define the value function by
\[
V(t,x)=\sup_{u \in \mathcal{U}_{t,T}}\E[g(X^{u;t,x}_T)], \ \ 0\leq t<T,\ x\in \mathcal{O}.
\]

 The goal of this section is to construct a super-solution of the Hamilton-Jacobi-Bellman equation \eqref{eq:PDE} with the terminal condition \eqref{eq:bndry}
that is smaller than the value function $V$. In order to do that, we need some growth property to be imposed on the pay-off function $g$ and the potential solutions of the PDE.  In this direction, we make an additional assumption:
\begin{Assumption}[{\bf Growth in $x$}] \label{ass:growth}
There exists a continuous and strictly positive gauge function
$\psi :\mathcal{O}\rightarrow (0, \infty)$ such that
\begin{enumerate}
\item for any  $\tau \leq \rho $ and any initial condition $\xi \in \mathcal{F}_{\tau}$, $\mathbb{P}(\xi \in \mathcal{O})=1$,  which satisfies
$\mathbb{E}[\psi (\xi)]<\infty$, 
 if the control $u$ is admissible, i.e. $u\in \mathcal{U}_{\tau,\rho}$, then
$$\mathbb{E}\left[\sup _{\tau \leq t \leq \rho}\psi (X^u_t)\right]<\infty\,;$$
\item $|g(x)|\leq C \psi (x)$ for some $C$.

\end{enumerate}
\end{Assumption}
 The assumption above is tailor-made to deal simultaneously with quadratic problems ($\mathcal{O}=\mathbb{R}^d, \psi (x)=|x|^2 $ {\bf or $\psi (x)=1+|x|^2$}) and utility maximization ($\mathcal{O}=(0,\infty)$, $\psi (x)=x^p$  or $\psi (x)=1+x^p$, $ -\infty <p<1$, $p\not= 0$). However, the choice of $\psi$ does matter, especially in the comparison principle that  we need for the terminal condition (see Remark \ref{face-lift}).
 \begin{Definition}\label{defn:stsubsoln}
 The set of stochastic sub-solutions for the parabolic PDE \eqref{eq:PDE}, denoted by $\mathcal{V}^-$, is  the set of functions $v:[0,T]\times \mathcal{O}\rightarrow \mathbb{R}$ which have the following properties:
 \begin{enumerate}
 \item[(i)] They are continuous and satisfy the terminal condition $v(T,\cdot) \leq g(\cdot)$ together with the growth condition
\begin{equation}\label{eq:grthv}
|v(t,x)|\leq C(v) \psi (x), 0\leq t\leq T,\  x\in \mathcal{O},\ \ \textrm{for \ some\ }\  C(v)<\infty.
\end{equation}
 \item[(ii)]  There exists a bound $L(v)<\infty$, depending on $v$, such that for each stopping time $\tau$ and each $\xi \in \mathcal{F}_{\tau}$ such that
$\mathbb{P}(\xi \in \mathcal{O})=1\  and \ \E[\psi (\xi)]<\infty,$
there exists a control $u \in \mathcal{U}_{\tau,T}$ defined on $[\tau,T]$ adapted to $\mathbb{F}$,  satisfying the bound
  $\|u\|\leq L(v)$ and  such that for any $\mathbb{F}$-stopping time $\rho \in [\tau,T]$ we have that
 \begin{equation}\label{eq:subm}
 v(\tau,\xi) \leq \E\left[v\left(\rho, X^{u;\tau,\xi}_{\rho}\right)\bigg|\mathcal{F}_{\tau} \right] \ a.s.
 \end{equation}
 \end{enumerate}
 \end{Definition}
 We do not expect the value function to be a stochastic sub-solution, except in the situations when there exists and admissible optimal control. As already mentioned, this is rarely the case, with our very restrictive definition  of admissibility. However, this does not cause any problem in the course of completing the Stochastic Perron Method: while the value function is not a sub-solution itself, it can be approximated by sub-solutions. 
 \begin{Remark}
 We ask for the sub-martingale property to hold only in between the fixed stopping time $\tau$ and any later  $\rho \geq \tau$, which is actually less than the full sub-martingale property on the stochastic interval $[\tau,T]$. 
 \end{Remark}
 
 \begin{Assumption}\label{ass:V-}
 We assume that $\mathcal{V}^{-} \neq \emptyset$. 
 \end{Assumption}
 \begin{Remark}
 Assumption~\ref{ass:V-} is satisfied, for example, when $g$ is bounded from below.
 \end{Remark}

 A crucial property of the set of stochastic solutions is the following stability result:
 \begin{Proposition}\label{prop:clsdundsup}
 If $v^1$ and $v^2$ are two stochastic sub-solutions, then $v=v^1 \vee v^2$ is also a stochastic sub-solution.
 \end{Proposition}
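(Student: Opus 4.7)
The plan is to verify the two defining properties of a stochastic sub-solution for $v = v^1 \vee v^2$ one at a time. Property (i) is almost immediate: the pointwise maximum of two continuous functions is continuous; since $v^i(T,\cdot)\leq g$ for $i=1,2$, we have $v(T,\cdot)\leq g$; and the growth estimate holds with $C(v):=C(v^1)+C(v^2)$ (or the max), since $|v|\leq |v^1|\vee|v^2|\leq C(v)\psi$.

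The substance is property (ii). Fix a stopping time $\tau$, an $\mathcal{F}_\tau$-measurable $\xi$ with $\xi\in\mathcal{O}$ and $\E[\psi(\xi)]<\infty$. Let $u^1\in\mathcal{U}_{\tau,T}$ and $u^2\in\mathcal{U}_{\tau,T}$ be controls chosen according to Definition \ref{defn:stsubsoln} for $v^1$ and $v^2$ respectively, satisfying $\|u^i\|\leq L(v^i)$. Because $v^1(\tau,\xi)$ and $v^2(\tau,\xi)$ are $\mathcal{F}_\tau$-measurable (by continuity of $v^i$ and measurability of $\xi$), the set
\[
A:=\{v^1(\tau,\xi)\geq v^2(\tau,\xi)\}
\]
belongs to $\mathcal{F}_\tau$. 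I then splice the two controls: define
\[
u_t(\omega):=u^1_t(\omega)\mathbf{1}_A(\omega)+u^2_t(\omega)\mathbf{1}_{A^c}(\omega),\qquad t\in(\tau,T].
\]
Since $A\in\mathcal{F}_\tau$, the indicator $\mathbf{1}_A$ is left-continuous adapted, so $u$ is again predictable and takes values in $U$, with $\|u\|\leq L(v):=L(v^1)\vee L(v^2)$; hence $u\in\mathcal{U}_{\tau,T}$.

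Next I use pathwise uniqueness for the SDE \eqref{state-eq-full} together with the fact that $A\in\mathcal{F}_\tau$: on the event $A$ the coefficients driving $X^{u;\tau,\xi}$ agree with those driving $X^{u^1;\tau,\xi}$, so $X^{u;\tau,\xi}=X^{u^1;\tau,\xi}$ a.s.\ on $A$, and analogously on $A^c$. Now fix any stopping time $\rho\in[\tau,T]$. Using $v\geq v^i$ and the sub-solution property of $v^i$,
\[
v^1(\tau,\xi)\mathbf{1}_A\leq \mathbf{1}_A\,\E\!\left[v^1(\rho,X^{u^1;\tau,\xi}_\rho)\,\big|\,\mathcal{F}_\tau\right]\leq \mathbf{1}_A\,\E\!\left[v(\rho,X^{u;\tau,\xi}_\rho)\,\big|\,\mathcal{F}_\tau\right],
\]
where the last inequality uses that $\mathbf{1}_A$ is $\mathcal{F}_\tau$-measurable and that on $A$ the two state trajectories coincide. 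The symmetric bound holds on $A^c$ with $v^2$. Since $v(\tau,\xi)=v^1(\tau,\xi)$ on $A$ and $v(\tau,\xi)=v^2(\tau,\xi)$ on $A^c$, adding the two inequalities yields
\[
v(\tau,\xi)\leq \E\!\left[v(\rho,X^{u;\tau,\xi}_\rho)\,\big|\,\mathcal{F}_\tau\right]\quad\text{a.s.},
\]
which is \eqref{eq:subm}.

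The main obstacle worth being careful about is the patching step: one needs $A\in\mathcal{F}_\tau$ so that $u$ is predictable and so that the SDE with control $u$ coincides pathwise with the $u^1$-driven SDE on $A$ and with the $u^2$-driven SDE on $A^c$. Continuity of the $v^i$ (part of Definition \ref{defn:stsubsoln}(i)) is what allows $v^i(\tau,\xi)$ to be $\mathcal{F}_\tau$-measurable, and pathwise uniqueness under Assumption \ref{ass:stsolcoef} (applied for the bounded control $u$) is what makes the identification of the trajectories on $A$ and $A^c$ rigorous. Everything else is routine conditioning.
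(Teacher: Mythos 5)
Your proof is correct and follows essentially the same route as the paper's: splice the two controls along the $\mathcal{F}_\tau$-measurable event $A=\{v^1(\tau,\xi)\geq v^2(\tau,\xi)\}$, identify the controlled trajectories on $A$ and $A^c$, apply the sub-martingale inequality for $v^1$ and $v^2$ piecewise, and add. Your extra remarks on predictability of the spliced control and on pathwise uniqueness make explicit some steps the paper leaves implicit, but the argument is the same.
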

 
 \begin{proof}
 We will only show that $v$ satisfies item (ii) of the definition of stochastic sub solution.
  We can choose the uniform bound corresponding to $v$ as
 $$L(v)=L(v^1)\vee L(v^2).$$

 Now, fix a stopping time $\tau$ and a random variable $\xi \in \mathcal{F}_{\tau}$ with $\mathbb{P}(\xi \in \mathcal{O})=1$ and $\mathbb{E}[\psi (\xi)]<\infty.$ Then, by the definition of the stochastic sub-solutions $v^1$ and $v^2$,  it follows that there are two controls $\|u_1\|\leq L(v^1)$ and $\|u_2|\leq L(v^2)$ satisfying
 \[
 v^i(\tau,\xi) \leq \E[v^i(\rho, X^{u_i;\tau,\xi}_{\rho})|\mathcal{F}_{\tau}], \quad i \in \{1,2\}.
 \]
Now define a control $u$ (on the stochastic interval $(\tau,T]$) by
\begin{equation}\label{eq:pstcntrl}
u=1_{\{v^{1}(\tau,\xi) \geq v^{2}(\tau,\xi)\}}u_1+1_{\{v^{1}(\tau,\xi) <v^{2}(\tau,\xi)\}}u_2.
\end{equation}
Now, for each $\tau \leq \rho \leq T$, we have
\begin{enumerate}
\item on  $\{v^{1}(\tau,\xi) \geq v^{2}(\tau,\xi)\} \in \mathcal{F}_{\tau}$ we have
\[
v^{1}(\rho, X^{u_1;\tau,\xi}_{\rho})=v^{1}(\rho, X^{u;\tau,\xi}_{\rho}) \leq v(\rho, X^{u;\tau,\xi}_{\rho});
\]
\item  on  $\{v^{1}(\tau,\xi)< v^{2}(\tau,\xi)\} \in \mathcal{F}_{\tau}$ we have 
\[
v^{2}(\rho, X^{u_2;\tau,\xi}_{\rho})=v^{2}(\rho, X^{u;\tau,\xi}_{\rho}) \leq v(\rho, X^{u;\tau,\xi}_{\rho}).
\]
\end{enumerate}
Applying the definition of sub-solutions for $v^1$ and $v^2$ (for controls $u_1$ and $u_2$) we get
\[
1_{\{v^{1}(\tau,\xi) \geq v^{2}(\tau,\xi)\}} v^{1}(\tau,\xi) \leq \E \left[1_{\{v^{1}(\tau,\xi) \geq v^{2}(\tau,\xi)\}}v^{1}(\rho, X^{u;\tau,\xi}_{\rho})\bigg|\mathcal{F}_{\tau}\right]  \ a.s.,
\]
since $\{v^{1}(\tau,\xi) \geq v^{2}(\tau,\xi)\}\in \mathcal{F}_{\tau}$. Therefore, according to item (1) above we have
\begin{equation}\label{eq:ooft}
1_{\{v^{1}(\tau,\xi) \geq v^{2}(\tau,\xi)\}} v^{1}(\tau,\xi) \leq \E \left[1_{\{v^{1}(\tau,\xi) \geq v^{2}(\tau,\xi)\}}v(\rho, X^{u;\tau,\xi}_{\rho})\bigg|\mathcal{F}_{\tau}\right]  \ a.s.
\end{equation}
Similarly, we obtain
\[
1_{\{v^{1}(\tau,\xi) < v^{2}(\tau,\xi)\}} v^{2}(\tau,\xi) \leq \E \left[1_{\{v^{1}(\tau,\xi) < v^{2}(\tau,\xi)\}}v^{2}(\rho, X^{u;\tau,\xi}_{\rho})\bigg|\mathcal{F}_{\tau}\right]  \ a.s.,
\]
and by item (2) above we have
\begin{equation}\label{eq:toft}
1_{\{v^{1}(\tau,\xi) < v^{2}(\tau,\xi)\}} v^{2}(\tau,\xi) \leq \E \left[1_{\{v^{1}(\tau,\xi) < v^{2}(\tau,\xi)\}}v(\rho, X^{u;\tau,\xi}_{\rho})\bigg|\mathcal{F}_{\tau}\right]  \ a.s.
\end{equation}
Putting \eqref{eq:ooft} and \eqref{eq:toft} together we conclude.
\end{proof}

\begin{Theorem}\label{thm:mthm1}
\emph{(The supremum of stochastic sub-solutions is a viscosity super-solution)}
Let Assumptions~\ref{as:bnd}-(1), \ref{ass:stsolcoef}, \ref{ass:domain}, \ref{ass:growth} and \ref{ass:V-} hold true. Assume also that $g$ is a lower semi-continuous function and $V<\infty$.
Then the lower stochastic  envelope
 \[
 v^{-}:=\sup_{v \in \mathcal{V}^-}v \leq V<\infty, 
 \]
  is a viscosity super-solution of \eqref{eq:PDE}. 
 Moreover, if we define 
 \begin{equation}\label{eq:cfdfn}
 v^-(T-,x):=\liminf_{(t'<T,x') \to (T, x)}v^-(t',x'), \quad x \in \mathcal{O},
 \end{equation}
 then
the function $v^-(T-,\cdot) (\geq g(\cdot))$ is a viscosity super-solution of \eqref{eq:bndry}. 
 \end{Theorem}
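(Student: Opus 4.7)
The inequality $v^- \leq V < \infty$ is immediate. For any $v \in \mathcal{V}^-$ and $(t_0, x_0) \in [0, T] \times \mathcal{O}$, applying Definition 1.4(ii) with $\tau = t_0$, $\xi = x_0$, $\rho = T$ yields an admissible control $u$ such that
\[
v(t_0, x_0) \leq \E\left[v(T, X^{u;t_0,x_0}_T)\right] \leq \E\left[g(X^{u;t_0,x_0}_T)\right] \leq V(t_0, x_0),
\]
using $v(T, \cdot) \leq g(\cdot)$ from Definition 1.4(i). Taking the supremum over $\mathcal{V}^-$ and invoking $V < \infty$ proves the first claim.

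For the super-solution property of $v^-$ on $[0, T) \times \mathcal{O}$, the plan is a contradiction argument in the spirit of the classical Perron method. Assume the lower semicontinuous envelope $v^-_*$ is not a viscosity super-solution at some $(t_0, x_0) \in [0, T) \times \mathcal{O}$. After adding a standard quartic penalty we may take a smooth $\varphi$ with $v^-_* - \varphi$ attaining a strict local minimum of value $0$ at $(t_0, x_0)$, such that either $\varphi_t + H(\cdot, \varphi_x, \varphi_{xx}) > 0$ or $G(\cdot, \varphi_x, \varphi_{xx}) < 0$ at this point. In either case one can select $u^* \in U$ making the controlled generator
\[
\varphi_t(t, x) + b(t, x, u^*) \cdot \varphi_x(t, x) + \frac{1}{2}\mathrm{Tr}\left[\sigma\sigma^T(t, x, u^*) \varphi_{xx}(t, x)\right]
\]
strictly positive at $(t_0, x_0)$: the $H$-case directly exploits the sup in $H$; the $G$-case invokes the contrapositive of Assumption 2.1(1), which forces $H = +\infty$ and hence makes the sup unbounded. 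By continuity of $b, \sigma$ and the derivatives of $\varphi$, this strict positivity persists on a closed time-space ball $\bar B \subset [0, T) \times \mathcal{O}$ around $(t_0, x_0)$. Choose $\eps > 0$ so small that $\varphi^\eps := \varphi + \eps$ still satisfies $\varphi^\eps < v^-_*$ on $\partial B$ (using strictness of the min together with compactness and lower semicontinuity), and propose the candidate
\[
w := \max(v^-, \varphi^\eps) \cdot \mathbf{1}_{\bar B} + v^- \cdot \mathbf{1}_{\bar B^c},
\]
which is consistent across $\partial B$ and satisfies $w \geq v^-$ with strict inequality near $(t_0, x_0)$.

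The goal is to verify $w \in \mathcal{V}^-$, contradicting $v^- = \sup \mathcal{V}^-$. The terminal and growth conditions are inherited since $\bar B$ is away from $T$ and $\varphi^\eps$ is bounded on $\bar B$. For the sub-martingale inequality, given $\tau, \xi$, splice controls: apply the control $u^v$ from Definition 1.4(ii) for $v^-$ outside the excursion of $(t, X_t)$ through $\bar B$, and $u^*$ on the stochastic interval between first entry $\theta$ and first exit $\theta'$. It\^o's formula applied to $\varphi$ on $[\theta, \theta']$, combined with the strict generator inequality, delivers the sub-martingale bound for $\varphi^\eps$ there; the original sub-solution property of $v^-$ bounds the process off the excursion; and the pointwise $\max$ in the definition of $w$ patches the two. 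The principal technical obstacle is that $v^-$, as a sup of continuous sub-solutions, is only lower semicontinuous, while Definition 1.4(i) asks for continuity; this is resolved by running the construction along a countable directed sequence $\{v_n\} \subset \mathcal{V}^-$ with $v_n \uparrow v^-$ (existence using Proposition 1.1 and separability), producing continuous $w_n \in \mathcal{V}^-$ that exceed $v_n$ at a point converging to $(t_0, x_0)$. Predictability and the uniform bound $\|u\| \leq \max(\|u^v\|, |u^*|)$ for the spliced control are standard. Finally, the super-solution claim for $v^-(T-, \cdot)$ (which in particular yields $v^-(T-, \cdot) \geq g(\cdot)$) is proved by the same scheme, using test functions of the form $\varphi(x) + \lambda(T - t)$ with a time-decaying slack $\lambda > 0$ to simultaneously handle the $\geq g$ inequality and the $G$-inequality at $T$.
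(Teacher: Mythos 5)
Your outline of the interior super-solution argument is, in substance, the same contradiction-plus-stopping construction that the paper uses, and the case split into ``$\varphi_t+H>0$'' versus ``$G<0$'' (reducing the latter to $H=+\infty$ via the contrapositive of Assumption~\ref{as:bnd}-(1)) is a legitimate, if slightly more elaborate, variation on the paper's route of first proving $-\varphi_t-H\geq 0$ and then inferring $H<\infty$ and hence $G\geq 0$. The key ingredients — a single fixed $u^*\in U$ making the controlled generator positive on a ball, an exit-time stopping, splicing of the $v$-control with $u^*$, and a Dini selection from a countable increasing sequence $v_n\nearrow v^-$ — are all present in your plan.

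There are, however, two concrete gaps. First, the contradiction bookkeeping is mis-stated: you propose to ``produce continuous $w_n\in\mathcal V^-$ that exceed $v_n$ at a point converging to $(t_0,x_0)$,'' but exceeding a particular $v_n$ does not contradict $v^-=\sup\mathcal V^-$, since every $w_n\leq v^-$ anyway. What one needs — and what the paper achieves by setting $\varphi^\eta=\varphi+\eta$ with $\varphi(t_0,x_0)=v^-(t_0,x_0)$ and $\eta<\delta'$ chosen via Dini so that $v_n-\delta'\geq\varphi$ on the annulus — is a \emph{single} stochastic sub-solution whose value at the fixed point $(t_0,x_0)$ strictly exceeds $v^-(t_0,x_0)$. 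This calibration is the crux and must be stated precisely.

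Second, the terminal condition is genuinely underdeveloped. Your claim that one test function family $\varphi(x)+\lambda(T-t)$ ``simultaneously'' handles both the $\geq g$ inequality and the $G$-inequality conflates two quite different mechanisms, each needing a separate argument. The $v^-(T-,\cdot)\geq g$ part is established in the paper by first proving $v^-(T,\cdot)=g$ via a building/bump argument (a spatial paraboloid $v^-(T,x_0)-|x-x_0|^2/\eta-k(T-t)$ with $k$ large enough to make the generator large) and then invoking lower semicontinuity; by contrast, the $G$-inequality at $T$ is obtained by passing to the limit from the \emph{interior} result already proved, using penalized test functions $\psi(x)-|x-y|^4+(T-t)/(T-s_m)^2$ whose time-slope \emph{blows up} (not decays) as $s_m\to T$, together with a careful extraction of parabolic-interior minimizers $(t_m,x_m)\to(T,y)$. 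Neither of these is a corollary of the other, and a single $\lambda(T-t)$ slack does not accomplish both: the $G$-condition is insensitive to $t$-derivatives, so no choice of $\lambda$ gives direct control of it, and the $\geq g$ part requires the bump-and-contradiction machinery rather than the limiting machinery. You would need to spell out both arguments to close this part of the proof.
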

\begin{Remark}
The function $v^-$ may not have a limit from the left at $t=T$.
We, therefore, modify this function as described in \eqref{eq:cfdfn} at $t=T$. If we consider the function $v^-$ with the new terminal condition $v^{-}(T-,\cdot)$, it still is lower-semi continuous, as it is used in the proof of Theorem \ref{thm:main-comp}.
\end{Remark}

\begin{proof} \hfill
\\ \textbf{Step 1.} The fact that $v^{-} \leq V$ follows directly from the definition of the class of stochastic sub-solutions and by the definition of $\mathcal{U}_{t,T}$ and $V$. \\
 \textbf{Step 2.} \emph{The interior super-solution property.}
Let $\varphi:[0,T] \times \mathcal{O} \to \mathbb{R}^d$ be a $C^{1,2}$-test function such that $v^{-}-\varphi$ attains a strict local minimum equal to zero at some  parabolic interior point $(t_0,x_0) \in [0,T) \times \mathcal{O}$.
We first  prove that 
 \begin{equation}\label{eq:supsolpde}
 -\varphi_t(t_0,x_0)-H(t_0,x_0,\varphi_x(t_0,x_0),\varphi_{xx}(t_0,x_0)) \geq 0,
 \end{equation}
  by contradiction. To this end, assume that
\[
(-\varphi_t-\sup_{u}L^{u}_t\varphi )(t_0,x_0)<0.
\]
But then there exists $\tilde{u} \in U$ such that 
\begin{equation}\label{eq:tildeu}
(-\varphi_t-L^{\tilde{u}}_t\varphi )(t_0,x_0)<0.
\end{equation}

Since the coefficients of the SDE are continuous there exists a small enough ball $B(t_0,x_0,\eps)$ such that
\[
-\varphi_t-L^{\tilde{u}}_t\varphi (t,x)<0, \quad (t,x) \in B(t_0,x_0,\eps),
\]
and 
\[
\varphi(t,x)<v^{-}(t,x), \quad  (t,x) \in B(t_0,x_0,\eps)-\{(t_0,x_0)\}.
\] 
To be precise, all along the paper, we use the norm $\|(t,x)\|=\max\{|t|, |x|\}$, so 
$$B(t_0,x_0,\eps):=\{(t,x)\in [0,T)\times \mathcal{O} | \max\{|t-t_0|, |x-x_0|\}<\eps\}.$$
Since $v^{-}-\varphi$ is lower semi-continuous and $\overline{B(t_0,x_0,\eps)}-B(t_0,x_0,\eps/2)$ is compact, there exists a $\delta>0$ satisfying
\[
v^{-}-\delta \geq \varphi \quad \text{on}\;\overline{B(t_0,x_0,\eps)}-B(t_0,x_0,\eps/2).
\]
Using Proposition 4.1 in \cite{bs2012a} together with Proposition \ref{prop:clsdundsup} above, we obtain a (countable) increasing sequence of stochastic sub-solutions
$v_n\nearrow v^-$. Now, since $\varphi$ is continuous, as well as $v_n$'s, we can use a Dini argument (identical to the one  in Lemma 2.4 of \cite{bs2012b})  to conclude  that for $\delta' \in (0,\delta)$ there exists a stochastic sub-solution $v=v_n$ (for some large enough $n$) such that
\begin{equation}
v-\delta' \geq \varphi \quad \text{on}\;\overline{B(t_0,x_0,\eps)}-B(t_0,x_0,\eps/2).
\end{equation}
Choosing $\eta \in (0,\delta')$ small enough we have that the function
\[
\varphi^{\eta}:=\varphi+\eta
\]
satisfies
\[
\begin{split}
-\varphi^{\eta}_t-L^{\tilde{u}}_t\varphi^{\eta}(t,x)&<0, \quad (t,x) \in B(t_0,x_0,\eps),
\\ \varphi^{\eta}(t,x)&<v(t,x), \quad (t,x) \in \overline{B(t_0,x_0,\eps)}-B(t_0,x_0,\eps/2),
\end{split}
\]
and
\[
\varphi^{\eta}(t_0,x_0)=v^{-}(t_0,x_0)+\eta >v^-(t_0,x_0).
\]
Now we define 
$$v^{\eta}=
\left \{
\begin{array}{l}
 v\vee  \varphi ^{\eta} \textrm{~on~} \overline{ B(t_0, x_0, \varepsilon)},\\
v \textrm{~outside~}\overline{ B(t_0, x_0, \varepsilon)}.
\end{array}
\right.
$$
Clearly, $v^{\eta}$ is continuous and $v^{\eta}(t_0,x_0)=\varphi^{\eta}(t_0,x_0)>v^{-}(t_0,x_0)$. And since $\eps$ can be chosen so that $T>t_0+\eps$, $v^{\eta}$ satisfies the terminal condition.  In addition, the growth condition in (i)  Definition~\ref{defn:stsubsoln} holds for $v^{\eta}$, since such growth condition holds for the approximate supremum $v$ (although we may not have, without additional assumptions, a similar growth condition on $v^-$).

We only need to show that $v^{\eta}$ satisfies (ii) in Definition~\ref{defn:stsubsoln}  to get a contradiction and complete the proof. Let $0\leq \tau \leq T$ be a fixed stopping time and 
$\xi \in \mathcal{F}_{\tau}$, $\mathbb{P}(\xi \in \mathcal{O})=1,$ such that
$\mathbb{E}[\psi (\xi)]<\infty.$ We need to construct a control $u\in \mathcal{U}_{\tau,T}$ that works for $v^{\eta}$ in (ii) in Definition~\ref{defn:stsubsoln}. Following the arguments in the proof of Proposition~\ref{prop:clsdundsup}, such  a control $u$ can be constructed in a surprisingly simple way, {\it which represents a significant technical improvement over the previous work \cite{bs2012a} or \cite{bs2012b}}.

Denote by $u_0\in \mathcal{U}_{\tau ,T}$ the control corresponding to initial time $\tau $ and initial condition $\xi$ in (ii) in Definition~\ref{defn:stsubsoln} for the stochastic sub-solution $v$.  Denote by $A$ the event
$$A=\{(\tau, \xi)\in B(t_0, x_0, \varepsilon/2) \textrm{~and~} \varphi ^{\eta}(\tau ,\xi)>v(\tau ,\xi)\}.$$
Recalling \eqref{eq:tildeu}, we 
define the new  admissible control $u_1\in \mathcal{U}_{\tau ,T}$ by
$$u_1=\tilde{u}\times 1_A
+u_0 \times 1_{A^c},$$
and by $\tau _1$ the first time after $\tau$ when the diffusion started at $\xi$ and controlled by $u_1$ hits the boundary of $B(t_0, x_0, \varepsilon/2)$:
$$\tau _1=\inf \{\tau\leq  t\leq T| X_t^{u_1;\tau, \xi}\in \partial B(t_0, x_0, \varepsilon/2)\}.$$
Now, denote by
$$\xi _1=  X_{\tau _1}^{u_1;\tau, \xi}\in \partial B(t_0, x_0, \varepsilon/2),$$
and by $u_2\in \mathcal{U}_{\tau _1,T}$ the control in (ii) in Definition~\ref{defn:stsubsoln} corresponding to $v$ for the starting time $\tau _1$ and initial condition $\xi_1$. Now, we can finally define
$$u=u_1\times 1_{\{\tau <t\leq \tau _1\}}+u_2 \times 1_{\{\tau_1 < t \leq T\}}.$$
Note that the control $u$ is bounded by $L(v) \vee |\tilde{u}|$, and, therefore, it is admissible. Consider any stopping time $\rho$ such that $\tau \leq \rho\leq T$.  On the event $A$, $\varphi ^{\eta}(\cdot, X_{\cdot})$ is a sub-martingale up to $\rho \wedge \tau _1$ (because of It\^o's formula together with the fact that $\varphi ^{\eta}$ is bounded in the interior ball), which reads
$$1_A \varphi ^{\eta}(\tau, \xi)\leq \mathbb{E}[1_A \varphi ^{\eta}(\rho \wedge \tau _1, X_{\rho \wedge \tau _1}^{\tilde{u};\tau,\xi })|\mathcal{F}_{\tau}] \ a.s.$$
Since 
$$1_A \varphi ^{\eta}(\rho \wedge \tau _1, X_{\rho \wedge \tau _1}^{\tilde{u};\tau,\xi})=1_A \varphi ^{\eta}(\rho \wedge \tau _1, X_{\rho \wedge \tau _1}^{u;\tau,\xi})\leq 1_A v ^{\eta}(\rho \wedge \tau _1, X_{\rho \wedge \tau _1}^{u;\tau;\xi}),$$
we actually obtain 
$$1_A v^{\eta}(\tau, \xi)=1_A \varphi ^{\eta}(\tau, \xi)\leq \mathbb{E}[1_A v ^{\eta}(\rho \wedge \tau _1, X_{\rho \wedge \tau _1}^{u;\tau,\xi})|\mathcal{F}_{\tau}]  \ a.s.$$
Next, we use the fact that $u_1$ is the ``optimal'' control for $v$,  together with $v=v^{\eta}$ everywhere outside the open ball $ B(t_0,x_0,\eps/2)$, to obtain:
$$1_{A^c} v^{\eta}(\tau, \xi)=1_{A^c} v (\tau, \xi)\leq \mathbb{E}[1_{A^c} v (\rho \wedge \tau _1, X_{\rho \wedge \tau _1}^{u_1;\tau.\xi})|\mathcal{F}_{\tau}]= \mathbb{E}[1_{A^c} v ^{\eta}(\rho \wedge \tau _1, X_{\rho \wedge \tau _1}^{u;\tau,\xi})|\mathcal{F}_{\tau}].$$
Putting the above together, we obtain:
 \begin{equation}\label{eq:submp}
 v^{\eta}(\tau,\xi) \leq \E\left[v^{\eta}\left(\rho \wedge \tau_1, X^{u;\tau,\xi}_{\rho \wedge \tau_1}\right)\bigg|\mathcal{F}_{\tau} \right]\ a.s.
 \end{equation}
Let us introduce yet another notation: $B=\{\rho \leq \tau _1\}\in \mathcal{F}_{\tau _1}.$ We know that, on the boundary $ \partial B(t_0, x_0, \varepsilon/2)$, $v=v^\eta.$ Applying the definition of $u$, together with the fact that $u_2$ is ``optimal'' for $v$ starting at $\tau_1$ with condition $\xi_1$, we have
$$1_{B^c} v^{\eta}(\tau _1, \xi_1)=1_{B^c} v (\tau _1, \xi_1)\leq \mathbb{E}[1_{B^c} v (\rho , X_{\rho}^{u_2;\tau _1,\xi _1})|\mathcal{F}_{\tau _1}]\leq \mathbb{E}[1_{B^c} v ^{\eta}(\rho, X_{\rho}^{u;\tau,\xi})|\mathcal{F}_{\tau _1}].$$
If we rewrite the RHS in 
\eqref{eq:submp} as
$$\E\left[v^{\eta}\left(\rho \wedge \tau_1, X^{u;\tau,\xi}_{\rho \wedge \tau_1}\right)\bigg|\mathcal{F}_{\tau} \right]=\E\left[ 1_B v^{\eta}\left(\rho, X^{u;\tau,\xi}_{\rho}\right) 
+ 1_{B^c} v^{\eta}(\tau _1, \xi_1)\bigg|\mathcal{F}_{\tau} \right],
$$
and use the tower property, we get, indeed
 \begin{equation*}
 v^{\eta}(\tau,\xi) \leq \E\left[v^{\eta}\left(\rho, X^{u;\tau,\xi}_{\rho}\right)\bigg|\mathcal{F}_{\tau} \right] \ a.s.
 \end{equation*}
This completes the proof of \eqref{eq:supsolpde}, from which it follows that
$$H(t_0,x_0,\varphi_x(t_0,x_0),\varphi_{xx}(t_0,x_0))<\infty.$$
Thanks to Assumption~\ref{as:bnd}-(1) we also have that
\begin{equation}\label{eq:usdtbstpth}
G(t_0,x_0,\varphi_x(t_0,x_0),\varphi_{xx}(t_0,x_0)) \geq 0,
\end{equation}
finishing the proof of interior super-solution property.

 \textbf{Step 3.} \emph{The terminal condition, Part I.} We will show that $v^{-}(T,\cdot)=g(\cdot)$.
Assume that for some $x_0 \in \mathcal{O}$ we have 
\[
v^-(T,x_0)<g(x_0).
\]
We will use this information to construct a contradiction. Since $g(\cdot)$ is lower-semi continuous then there exists an $\eps>0$ such that
\[
g(x)\geq v^-(T,x_0)+\eps, \quad \text{if}\quad |x-x_0| \leq \eps.
\]
Due to the fact that $v^-$ is lower-semi continuous, it is bounded from below on the compact set
\[
(\overline{B(T,x_0,\eps)}-B(T,x_0,\eps/2))\cap ([0,T] \times \mathcal{O}).
\]
For a small enough $\eta>0$ we have that 
\[
v^{-}(T,x_0)-\frac{\eps^2}{4 \eta}<-\eps+\inf_{(t,x) \in (\overline{B(T,x_0,\eps)}-B(T,x_0,\eps/2))\cap ([0,T] \times \mathcal{O})}v^{-}(t,x).
\]
Since the above inequality is strict, following the proof of Step 2 in Theorem \ref{thm:mthm1}, we use again Proposition 4.1 in \cite{bs2012a} together with Proposition \ref{prop:clsdundsup} above, and  a  Dini argument to find  a stochastic sub-solution $v \in \mathcal{V}^-$ such that
\begin{equation}\label{eq:consl24}
v^{-}(T,x_0)-\frac{\eps^2}{4 \eta}<-\eps+\inf_{(t,x) \in (\overline{B(T,x_0,\eps)}-B(T,x_0,\eps/2))\cap ([0,T] \times \mathcal{O})}v(t,x).
\end{equation}
For $k>0$ define
\[
\varphi^{\eta,\eps,k}(t,x)=v^{-}(T,x_0)-\frac{|x-x_0|^2}{\eta}-k(T-t).
\]
Choose $k$ large enough, at least as large as $k\geq \varepsilon/4\eta$ but possibly much larger,  such  that 
\[
\left[-\varphi^{\eta,\eps,k}_t-\sup_{u}L^{u}_t\varphi^{\eta,\eps,k}\right](t_0,x_0)<0 \quad \text{on} \quad \overline{B(T,x_0,\eps)}.
\]
Using \eqref{eq:consl24} we obtain
\[
\varphi^{\eta,\eps,k} \leq -\eps+v \quad \text{on} \quad  (\overline{B(T,x_0,\eps)}-B(T,x_0,\eps/2))\cap ([0,T] \times \mathcal{O}).
\]
On the other hand,
\[
\varphi^{\eta,\eps,k} (T,x) \leq v^{-}(T,x_0) \leq g(x)-\eps, \quad \text{for} \quad |x-x_0| \leq \eps.
\]
Now, let $\delta<\eps$ and define
\[
v^{\eps,\eta,k,\delta}:=\left\{ 
\begin{array}{l} v \vee (\varphi^{\eps,\eta,k}+\delta) \quad \text{on} \quad \overline{B(T,x_0,\eps)}, \\
v \quad \text{outside} \quad  \overline{B(T,x_0,\eps)} .
\end{array}\right.
\]
Now using the idea in Step 1 of the proof, we can show that $v^{\eps,\eta,k,\delta} \in \mathcal{V}^-$ but $v^{\eps,\eta,k,\delta}(T,x_0)=v^{-}(T,x_0)+\delta>v^{-}(T,x_0)$, leading to a contradiction. 

The only reason we actually proved $v^-(T, \cdot)= g(\cdot)$ was to get some information about the left liminf $v^-(T-, \cdot)$.  More precisely, since $v^-$ is lower semi-continuous, we know that
$$g(\cdot)= v^-(T, \cdot)\leq v^-(T-, \cdot).$$
In  order to finish the proof of the Theorem, we only need to show that  $v^-(T-,\cdot)$ is a viscosity super-solution of \eqref{eq:bndry}, which we will do in the next step.

\noindent \textbf{Step 4.} \emph{The terminal condition, Part II.} We show  that the l.s.c. function  $v^{-}(T-,\cdot)$ is a viscosity super-solution of 
 \[
 G(T,x,v^{-}_x(T,x), v^{-}_{xx}(T,x)) \geq 0, \quad x \in \mathcal{O}.
 \]
 
 
 The  arguments used below trace back to \cite{bcs} and were  technically refined later for more general models of super-hedging  in \cite{cpt},  \cite{st}  and others, as presented in the  survey paper \cite{st-lnm}.  We basically use the notation from  Lemma 4.3.2 in \cite{MR2533355} which summarizes the existing literature.

 More precisely, we  rely on the fact that $v^{-}$ satisfies the same equation in the interior, a fact we established in Step 2, to get information about the limit  as $t\rightarrow T$.
Let $y \in \mathbb{R}^{d}$ and $\psi(x)$ be a test function satisfying
\begin{equation}\label{eq:tfspsl}
0=v^{-}(T-,y)-\psi(y)=\min_{x \in \mathbb{R}^d}(v^{-}(T-,x)-\psi(x)).
\end{equation}
By the very definition  of $v^{-}(T-, \cdot)$, there exists a sequence $(s_m,y_m)$ converging to $(T,y)$ with $s_m<T$ such that
\[
\lim_{m \to \infty} v^{-}(s_m,y_m)=v^{-}(T-,y).
\]
Let us construct another test function that depends both on $t$ and $x$ variables:
\[
\psi_m(t,x)=\psi(x)-|x-y|^4+\frac{T-t}{(T-s_m)^2},
\]
and choose $(t_m,x_m) \in [s_m,T] \times \overline{B(y,\varepsilon)}$ as a minimum of $v^{-}-\psi_m$ on $[s_m,T] \times \overline{B(y,\varepsilon)}$ where $\varepsilon$ is chosen small enough so that $\overline{B(y,\varepsilon)}\subset \mathcal{O}$.

What we would like to do next is to show that in fact $t_m<T$ for large enough $m$ and that $x_m \rightarrow y$. The first fact follows from the observation that
\[
v^-(s_m,y_m)-\psi_m(s_m,y_m) \leq -\frac{1}{2(T-s_m)}<0,
\]
for large enough $m$ and that
\[
v^{-}(T-,x)-\psi_m(T,x)\geq v^{-}(T-,x)-\psi(x) \geq 0,
\]
where the second inequality follows from \eqref{eq:tfspsl}. Let us focus on the convergence of $x_m$ to $y$. The sequence $(x_{m})$ converges (up to choosing a sub-sequence) to some $z \in \overline{B(y,1)}$. 
By construction, $s_m \leq t_m$. Using this and  the choice of $(t_m,x_m)$ we obtain the following string of inequalities:
\[
\begin{split}
0 &\leq (v^-(T-,z)-\psi(z))-(v^{-}(T-,y)-\psi(y)) 
\\ &\leq  \liminf_{m \to \infty}\Big [(v^{-}(t_m,x_m)-\psi(x_m))-(v^{-}(s_m,y_m)-\psi(y_m))\Big ]
\\ &\leq \liminf_{m \to \infty}\Big [(v^{-}(t_m,x_m)-\psi_m(t_m,x_m))-(v^{-}(s_m,y_m)-\psi_m(s_m,y_m))
\\&- |x_m-y|^4+\frac{T-t_m}{(T-s_m)^2}  + |y_m-y|^4-\frac{T-s_m}{(T-s_m)^2}\Big ]
\\&\leq  \liminf_{m \to \infty}\Big[- |x_m-y|^4+|y_m-y|^4\Big]= -|z-y|^4,
\end{split}
\]
which proves that $z=y$.

We know that  $(t_m,x_m)$ is a minimizer of $v^--\psi_m$ over  $[s_m,T] \times \overline{B(y,\varepsilon)}$ by definition, and we also know that $s_m\leq t_m<T$ for large $m$.  Since $x_m\rightarrow y$, we conclude that (for $m$ large enough) we have
$(v^--\psi_m)(t_m, x_m)\leq (v^--\psi_m)(t,x)$ for $ t_m\leq t<T, \ |x-x_m|\leq \varepsilon/2.$
While this does not mean that $(t_m,x_m)$ is a local interior min for $v^--\psi_m$ (because we may have $t_m=s_m$), it does mean that we have a local ``parabolic interior minimum''. It is well known that, for example from \cite{CIL},  for parabolic equations, a ``parabolic interior minimum'' is enough to use $\psi_m$ as a test function at $(t_m,x_m)$, and, therefore first conclude that 
$$ - D_t \psi_m (t_m,x_m)-H(t_m,x_m,D_x \psi _m(t_0,x_0),D^2_x \psi _m (t_m,x_m)) \geq 0,
$$
so $H(t_m,x_m,D_x \psi _m(t_0,x_0),D^2_x \psi _m (t_m,x_m))<\infty$ and, consequently, 
\[
G(t_m,x_m,D_x \psi_m(t_m,x_m), D^2_x \psi_m(t_m,x_m)) \geq 0.
\]
Now the claim of this step follows from the continuity of $G$ and the fact that $x_m \to y$, as the derivatives of $\psi_m$ with respect to $x$ converge to those of $\psi$.


 
 \end{proof}
 \section{Stochastic super-solutions}\label{sec:ssuper}
 
In this section we consider the weak formulation of the stochastic control problem.
\begin{Assumption}\label{ass:weaksoln}
  We assume that  the coefficients $b :[0,T]\times \mathbb{R}^d \times U \rightarrow \mathbb{R}^d$ and $\sigma :[0,T]\times \mathbb{R}^d \times U\rightarrow \mathbb{M}_{d,d'}(\mathbb{R})$ are continuous. 
\end{Assumption}
\begin{Definition}
  For each $(s,x)$ we denote by $\mathcal{U}_{s,x}$ the set of weak admissible controls for the  \eqref{eq:SDE}, by which we mean a 
 $$\Big( \Omega ^{s,x}, \mathcal{F}^{s,x}, ( \mathcal{F}^{s,x}_t) _{s\leq t\leq T}, \mathbb{P}^{s,x}, (W^{s,x}_t)_{s\leq t\leq T},(X^{s,x}_t)_{s\leq t\leq T}, (u_{t})_{s\leq t\leq T}\bigg),$$
where \begin{enumerate}
\item $( \Omega ^{s,x}, \mathcal{F}^{s,x}, ( \mathcal{F}^{s,x}_t) _{s\leq t\leq T}, \mathbb{P}^{s,x})$ is an arbitrary stochastic basis satisfying the usual conditions, 
  \item $W^{s,x}$ is  a $d'$-dimensional Brownian motion with respect to the filtration $( \mathcal{F}^{s,x}_t) _{s\leq t\leq T}$,
\item  $u$ is a predictable and uniformly bounded $U$-valued process,
 \item  $X^{s,x}$ is a  continuous and adapted process satisfying \eqref{eq:SDE} with initial condition $X_s=x\in \mathcal{O}$, and
$\mathbb{P}^{s,x}(X^{s,x}_t\in \mathcal{O},\ s\leq t\leq T)=1$ together with
$$\mathbb{E}^{s,x}\left[\sup _{s\leq t\leq T}\psi (X^{s,x}_t)\right]<\infty,$$ 
\end{enumerate}
for the gauge function $\psi$ in Section~\ref{sec:subsoln}.
  \end{Definition}
   Now, for some measurable function $g:\mathcal{O}\rightarrow \mathbb{R}$, we denote by 
 \begin{equation}\label{value-weak}\mathfrak{V}(s,x):=\sup _{\mathcal{U}^{s,x}}\mathbb{E}^{s,x}[g(X^{s,x}_T)],\end{equation}
 the value function of the weak  control problem. 
 \begin{Assumption}\label{g}
 The pay-off function $g$ is  an upper semi-continuous function satisfying $|g(\cdot)|\leq C \psi(\cdot)$.
\end{Assumption}
 \begin{Remark}
\begin{enumerate}
\item Because of the growth assumption on weakly controlled solutions, 
$\mathbb{E}^{s,x}[g(X^{s,x}_T)]$ is well defined and finite, so $ \mathfrak{V}>-\infty.$
\item  When both are well-defined it clearly holds that $V \leq \mathfrak{V}$. 
\end{enumerate}
 \end{Remark}
 Our goal in this section is to construct an upper bound of $\mathfrak{V}$ that is a viscosity sub-solution.

\begin{Definition}\label{def:supersolution}
 The set of stochastic super-solutions for the parabolic PDE \eqref{eq:PDE}, denoted by $\mathcal{V}^+$, is  the set of functions $v:[0,T]\times \mathcal{O}^d\rightarrow \mathbb{R}$ which have the following properties:
\begin{enumerate}
\item They are continuous and satisfy the terminal condition $v(T,\cdot) \geq g(\cdot)$ together with the growth condition
$$|v(t,x)|\leq C(v) \psi (x), 0\leq t\leq T,\  x\in \mathcal{O}.$$

\item 
 For each $(s,x)\in[0,T]\times \mathcal{O}$, and each weak  control 
  $$\Big( \Omega ^{s,x}, \mathcal{F}^{s,x}, ( \mathcal{F}^{s,x}_t) _{s\leq t\leq T}, \mathbb{P}^{s,x}, (W^{s,x}_t)_{s\leq t\leq T},(X^{s,x}_t)_{s\leq t\leq T}, (u_{t})_{s\leq t\leq T}\bigg),$$
  the process $(u(t,X^{s,x}_t))_{s\leq t\leq T}$ is a super-martingale on $(\Omega ^{s,x}, \mathbb{P}^{s,x})$ with respect to the filtration 
$( \mathcal{F}^{s,x}_t) _{s\leq t\leq T}$.
\end{enumerate}
\end{Definition}
 
 \begin{Assumption}\label{ass:V+nepty}
 $\mathcal{V}^+ \neq \emptyset$.
 \end{Assumption}
 \begin{Remark}
 Assumption~\ref{ass:V+nepty} is satisfied, for example, when $g$ is bounded from above.
 \end{Remark}

 \begin{Theorem}\label{thm:mthm2}\emph{(The infimum of stochastic super-solutions is a viscosity sub-solution)}
Let Assumptions ~\ref{as:bnd}-(2),~\ref{ass:weaksoln}, \ref{g}, and \ref{ass:V+nepty} hold true.  Then $v^+=\inf _{v\in \mathcal{V}^+}v$ is a viscosity sub-solution of \eqref{eq:PDE}. Moreover, 
the USC function $v^{+}(T,\cdot)$ is a viscosity sub-solution of \eqref{eq:bndry}.
 \end{Theorem}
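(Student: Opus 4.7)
The strategy mirrors the proof of Theorem~\ref{thm:mthm1} but in the dual direction. For the trivial half, $v^+ \geq \mathfrak{V}$ follows because, for any $v \in \mathcal{V}^+$ and any weak admissible control, the super-martingale property combined with $v(T,\cdot) \geq g$ yields $v(s,x) \geq \mathbb{E}^{s,x}[v(T,X_T)] \geq \mathbb{E}^{s,x}[g(X_T)]$; taking the supremum over controls and the infimum over $v$ gives the inequality.

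For the interior sub-solution property, I would argue by contradiction. Suppose a $C^{1,2}$ test function $\varphi$ touches $v^+$ strictly from above at a parabolic interior point $(t_0, x_0) \in [0,T) \times \mathcal{O}$ with $\min\{-\varphi_t - H, G\}(t_0, x_0, \varphi_x, \varphi_{xx}) > 0$. Then $G > 0$ forces $H < \infty$ via Assumption~\ref{as:bnd}-(2), and continuity together with the sup-structure of $H$ yields $-\varphi_t - L^u_t \varphi \geq -\varphi_t - H \geq \eps_0 > 0$ on a small ball $\overline{B(t_0, x_0, \eps)}$, \emph{uniformly in} $u \in U$. A dual min-stability $v^1 \wedge v^2 \in \mathcal{V}^+$ (proved analogously to Proposition~\ref{prop:clsdundsup} by switching between two super-solutions), combined with a Dini argument for decreasing sequences of continuous functions with USC limit, produces $v \in \mathcal{V}^+$ and $\delta' > 0$ with $v \leq \varphi - \delta'$ on the annulus $\overline{B(t_0,x_0,\eps)} \setminus B(t_0, x_0, \eps/2)$. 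For $0 < \eta < \delta'$ and $\eps$ small enough that $t_0 + \eps < T$, define
\[
v^\eta := \begin{cases} v \wedge (\varphi - \eta) & \text{on } \overline{B(t_0,x_0,\eps)}, \\ v & \text{outside.} \end{cases}
\]
Continuity across $\partial B(t_0,x_0,\eps)$ holds because $v \leq \varphi - \eta$ on the annulus, the growth bound and terminal condition are inherited from $v$, and $v^\eta(t_0, x_0) = \varphi(t_0, x_0) - \eta < v^+(t_0, x_0)$.

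The main obstacle is verifying that $v^\eta \in \mathcal{V}^+$ — that is, $(v^\eta(t, X_t))$ must be a super-martingale for \emph{every} weak admissible control, with no freedom to tailor a control to $v^\eta$ (in contrast to the sub-solution case). The saving idea is that the open ``bad'' region $R := \{v > \varphi - \eta\} \cap \overline{B(t_0,x_0,\eps)}$, where $v^\eta = \varphi - \eta$, is strictly contained in $B(t_0, x_0, \eps/2)$ by the Dini bound and in particular bounded away from $t = T$; moreover $v^\eta = v = \varphi - \eta$ on $\partial R$. For a stopping time $\tau$, let $\rho := \inf\{t \geq \tau : (t, X_t) \notin R\} \wedge T$, which is strictly less than $T$ on $\{(\tau, X_\tau) \in R\}$ by the time-boundedness of $R$. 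On this event, It\^o's formula applied to $\varphi$, together with the uniform bound $-\varphi_t - L^u_t \varphi \geq \eps_0$, shows that $(\varphi - \eta)(\cdot \wedge \rho, X_{\cdot \wedge \rho})$ is a genuine super-martingale; since $v^\eta(\rho, X_\rho) = v(\rho, X_\rho)$ on $\partial R$, the super-martingale property of $v$ from $\rho$ onward chains via the tower property to give $\mathbb{E}[v^\eta(\sigma, X_\sigma) \mid \mathcal{F}_\tau] \leq v^\eta(\tau, X_\tau)$ for any stopping time $\sigma \geq \tau$. On $\{(\tau, X_\tau) \notin R\}$, $v^\eta(\tau, X_\tau) = v(\tau, X_\tau)$, and the inequality follows directly from $v^\eta \leq v$ and the super-martingale property of $v$. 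Hence $v^\eta \in \mathcal{V}^+$, contradicting $v^+ = \inf_{\mathcal{V}^+}$.

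For the terminal condition, the argument parallels Step~4 of Theorem~\ref{thm:mthm1}. Given a $C^2$ test $\psi$ touching the USC function $v^+(T,\cdot)$ strictly from above at $y \in \mathcal{O}$, there is nothing to prove unless $v^+(T,y) > g(y)$ (since $v^+(T,\cdot) \geq g$ comes for free from the definition of $\mathcal{V}^+$); assuming this, I must show $G(T, y, \psi_x(y), \psi_{xx}(y)) \leq 0$. Pick $(s_m, y_m) \to (T,y)$ with $s_m < T$ along which $v^+$ approaches $v^+(T,y)$, perturb $\psi$ by a quartic spatial term and a singular time-penalty (with sign chosen opposite to Step~4 of Theorem~\ref{thm:mthm1}), and run the standard chain-of-inequalities argument to localize a maximizer $(t_m, x_m)$ of $v^+ - \psi_m$ in the parabolic interior with $(t_m, x_m) \to (T,y)$. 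The interior sub-solution property from Step~2 at $(t_m, x_m)$ yields $\min\{-\partial_t \psi_m - H, G\} \leq 0$; the time-penalty is engineered so that $-\partial_t \psi_m \to +\infty$, forcing the first term to be strictly positive whenever $H$ stays bounded, and thereby forcing $G \leq 0$ at $(t_m, x_m, D_x\psi_m, D^2_x\psi_m)$. In the alternative case $H \to \infty$, Assumption~\ref{as:bnd}-(2) again delivers $G \leq 0$. Continuity of $G$ and convergence of the derivatives of $\psi_m$ to those of $\psi$ then yield $G(T, y, \psi_x(y), \psi_{xx}(y)) \leq 0$, completing the verification.
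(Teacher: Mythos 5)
Your Steps~1 and~2 are correct and essentially follow the paper. In Step~2 you stop at the first exit of the ``bad'' set $R:=\{v>\varphi-\eta\}\cap\overline{B(t_0,x_0,\eps)}$ rather than the exit of $B(t_0,x_0,\eps/2)$ as in the paper, but the two bookkeepings are equivalent: in both cases $\varphi-\eta$ is an It\^o super-martingale up to exit, $v^\eta$ coincides with $v$ at the exit, and the super-martingale property of $v$ plus the tower property chain to a later stopping time.

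Step~3 has a genuine gap, and the paper in fact does \emph{not} use a dual of the limiting argument of Step~4 of Theorem~\ref{thm:mthm1}. Two problems. First, $v^+$ is an infimum of continuous functions and hence USC; this only gives $\limsup_{(t'<T,x')\to(T,y)}v^+(t',x')\leq v^+(T,y)$, so the sequence $(s_m,y_m)$ with $s_m<T$ and $v^+(s_m,y_m)\to v^+(T,y)$ that your argument requires need not exist. (For the super-solution side the paper works with $v^-(T-,\cdot):=\liminf$, precisely to manufacture such an approaching sequence; the sub-solution statement concerns $v^+(T,\cdot)$ itself, not a left limsup.) Second, the sign of the time-penalty works against you. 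To push the maximizer of $v^+-\psi_m$ away from $\{t=T\}$ you need $(v^+-\psi_m)(s_m,y_m)$ strictly above the maximum over $\{t=T\}$, which forces $\psi_m(s_m,y_m)$ to lie \emph{below} $\psi_m(T,\cdot)\approx\psi$, i.e.\ the time-penalty must be negative at $s_m$ and zero at $T$; hence $\partial_t\psi_m>0$ and $-\partial_t\psi_m<0$ along the relevant path. Then $\min\{-\partial_t\psi_m-H,\,G\}\leq0$ gives no information about $G$ because the first argument is already negative. There is no choice of smooth $\psi_m$ making $-\partial_t\psi_m\to+\infty$ compatible with the required localization, so the ``sign chosen opposite to Step~4'' approach cannot close.

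The paper instead argues Step~3 by direct construction, mirroring Step~2: assume for contradiction that $G(T,x_0,\psi_x(x_0),\psi_{xx}(x_0))>0$ \emph{and} $v^+(T,x_0)>g(x_0)$. By continuity of $G$ and Assumption~\ref{as:bnd}-(2), $H(t,x,\psi_x(x),\psi_{xx}(x))\leq C$ on a small rectangle $\{T-t\leq\delta_0,\ |x-x_0|\leq\eps\}$, and $\psi\geq g+\eps$ near $x_0$. Then $\psi^k(t,x):=\psi(x)+k(T-t)$ satisfies $-\psi^k_t-H=k-H\geq k-C>0$ there for $k$ large. After a ``donut'' Dini bound on $v$, the function $v^k:=v\wedge(\psi^k-\eta)$ (glued to $v$ outside) is shown to lie in $\mathcal{V}^+$ by the same stopping argument as Step~2, yet $v^k(T,x_0)=\psi(x_0)-\eta<v^+(T,x_0)$, contradicting the definition of $v^+$. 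This construction is the piece your proposal needs.
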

 \begin{proof}\hfill 
 \\ \textbf{Step 1.} The fact that $v^{+} \geq \mathfrak{V}$ follows directly from the definition of the class of stochastic sub-solutions and by the definition of $\mathcal{U}$. \\
\noindent \textbf{Step 2.} \emph{The interior sub-solution property.}
 Let $\varphi:[0,T] \times \mathcal{O} \to \mathbb{R}^d$ be a $C^{1,2}$-test function such that $v^{+}-\varphi$ attains a strict local maximum equal to zero at some parabolic interior point $(t_0,x_0) \in [0,T) \times \mathbb{R}^d$, where the viscosity sub-solution property fails, i.e.,
 \[
 \min\{-\varphi_t(t_0,x_0)-H(t,x,\varphi_x(t_0,x_0),\varphi_{xx}(t_0,x_0)), G(t,x,\varphi_x(t_0,x_0), \varphi_{xx}(t_0,x_0))\}>0.
 \]
Then since $G$ is continuous and $H$ is continuous in the interior of its domain it follows that there exists a small enough ball $B(t_0,x_0,\eps)$ such that, for all $(t,x) \in B(t_0,x_0,\eps)$ we have:
\[
\min\{-\varphi_t(t,x)-H(t,x,\varphi_x(t,x),\varphi_{xx}(t,x)), G(t,x,\varphi_x(t,x), \varphi_{xx}(t,x))\}>0.
 \]   
 Now the rest of the proof  of this step is very similar to the  corresponding step in the proof of Theorem 2.1 in \cite{bs2012a}, but much simplified by following the stopping idea in the proof of Theorem \ref{thm:mthm1} (step 2) above.
 For the sake of completeness and the convenience of the reader we actually include the remaining part of the  proof.
 The function $v^{+}-\varphi$ is uper semi-continuous and $\overline{B(t_0,x_0,\eps)}-B(t_0,x_0,\eps/2)$ is compact, there exists a $\delta>0$ satisfying
\[
v^{+}+\delta \leq \varphi \quad \text{on}\;\overline{B(t_0,x_0,\eps)}-B(t_0,x_0,\eps/2).
\]
Using Proposition 4.1 in \cite{bs2012a} together with  the obvious observation that the minimum of two stochastic super-solutions is also a stochastic super-solution, we obtain a (countable) decreasing sequence of stochastic super-solutions
$v_n\searrow v^+$. Now, since $\varphi$ is continuous, as well as $v_n$'s, we can use  once again a Dini argument (identical to the one  in Lemma 2.4 of \cite{bs2012b})  to conclude  that for $\delta' \in (0,\delta)$ there exists a stochastic super-solution $v=v_n$ (for some large enough $n$) such that
\[v+\delta' \leq \varphi \quad \text{on}\;\overline{B(t_0,x_0,\eps)}-B(t_0,x_0,\eps/2).
\]
Choosing $\eta \in (0,\delta')$ small enough we have that the function
\[
\varphi^{\eta}:=\varphi-\eta
\]
satisfies
\[
\begin{split}
-\varphi^{\eta}_t (t,x)- H(t,x,\varphi ^{\eta}_x(t,x),\varphi^{\eta}_{xx}(t,x))>0,  (t,x) \in B(t_0,x_0,\eps),
\\ \varphi^{\eta}(t,x)>v(t,x),\ (t,x) \in \overline{B(t_0,x_0,\eps)}-B(t_0,x_0,\eps/2),
\end{split}
\]
and
\[
\varphi^{\eta}(t_0,x_0)=v^{+}(t_0,x_0)-\eta <v^+(t_0,x_0).
\]
Now we define, similarly to Step 2 above,  
$$v^{\eta}=
\left \{
\begin{array}{l}
 v\wedge  \varphi ^{\eta} \textrm{~on~} \overline{ B(t_0, x_0, \varepsilon)},\\
v \textrm{~outside~}\overline{ B(t_0, x_0, \varepsilon)}.
\end{array}
\right.
$$
Clearly, $v^{\eta}$ is continuous and $v^{\eta}(t_0,x_0)=\varphi^{\eta}(t_0,x_0)>v^{-}(t_0,x_0)$. And since $\eps$ can be chosen so that $T>t_0+\eps$, $v^{\eta}$ satisfies the terminal condition.  Again, the growth condition in (i)  Definition~\ref{defn:stsubsoln} holds for $v^{\eta}$, since such growth condition holds for the approximate infimum $v$.
We now only need to show  that $v^{\eta}$ satisfies (ii) in Definition~\ref{def:supersolution}  to get a contradiction and complete the proof. Fix an admissible weak control 
$$\Big( \Omega ^{s,x}, \mathcal{F}^{s,x}, ( \mathcal{F}^{s,x}_t) _{s\leq t\leq T}, \mathbb{P}^{s,x}, (W^{s,x}_t)_{s\leq t\leq T},(X^{s,x}_t)_{s\leq t\leq T}, (u_{t})_{s\leq t\leq T}\bigg).$$
Fix now $s\leq \tau \leq \rho\leq T$ two stopping times of the filtration   $( \mathcal{F}^{s,x}_t) _{s\leq t\leq T}$. Denote, similarly to Step 2,  by $A$ the event
$$A=\{(\tau, X^{s,x}_{\tau})\in B(t_0, x_0, \varepsilon/2) \textrm{~and~} \varphi ^{\eta}(\tau , X^{s,x}_{\tau})<v(\tau ,X^{s,x}_{\tau})\}.$$
Denote by  $\tau _1$ the first time after $\tau$ when the diffusion hits the boundary of $B(t_0, x_0, \varepsilon/2)$:
$$\tau _1=\inf \{\tau\leq  t\leq T| X^{s,x}_t\in \partial B(t_0, x_0, \varepsilon/2)\}.$$
  On the event $A$, $\varphi ^{\eta}(\cdot, X^{s,x}_{\cdot})$ is a continuous super-martingale up to $\rho \wedge \tau _1$ (because of It\^o's formula together with the fact that $\varphi ^{\eta}$ is bounded in the interior ball), which reads
$$1_A \varphi ^{\eta}(\tau, X^{s,x}_{\tau})\geq \mathbb{E}^{s,x}[1_A \varphi ^{\eta}(\rho \wedge \tau _1, X_{\rho \wedge \tau _1}^{s,x})|\mathcal{F}^{s,x}_{\tau}] \ \mathbb{P}^{s,x}-a.s.$$
Since
$1_A \varphi ^{\eta}(\rho \wedge \tau _1, X_{\rho \wedge \tau _1}^{s,x}) \geq 1_A v ^{\eta}(\rho \wedge \tau _1, X_{\rho \wedge \tau _1}^{s,x}),$
we have 
$$1_A v^{\eta}(\tau, X^{s,x}_{\tau})=1_A \varphi ^{\eta}(\tau, X^{s,x}_{\tau})\geq \mathbb{E}^{s,x}[1_A v ^{\eta}(\rho \wedge \tau _1, X_{\rho \wedge \tau _1}^{s,x})|\mathcal{F}^{s,x}_{\tau}]  \ \mathbb{P}^{s,x}-a.s.$$
Next, we use the optional sampling theorem applied to the continuous super-martingale $v(\cdot, X^{s,x}_{\cdot})$ in between the stopping times $\tau\leq \rho \wedge \tau _1$,  together with the observation that  $v=v^{\eta}$ everywhere outside the open ball $ B(t_0,x_0,\eps/2)$, to obtain:
\begin{eqnarray*}
1_{A^c} v^{\eta}(\tau, X^{s,x}_{\tau})=1_{A^c} v (\tau, X^{s,x}_{\tau})\geq \mathbb{E}^{s,x}[1_{A^c} v (\rho \wedge \tau _1, X_{\rho \wedge \tau _1}^{s,x})|\mathcal{F}^{s,x}_{\tau}]\\
\geq \mathbb{E}^{s,x}[1_{A^c} v ^{\eta}(\rho \wedge \tau _1, X_{\rho \wedge \tau _1}^{s,x})|\mathcal{F}^{s,x}_{\tau}], \ \mathbb{P}^{s,x}-a.s.\end{eqnarray*}
Putting the above together, we obtain:
 \begin{equation}\label{eq:supermp}
 v^{\eta}(\tau,X^{s,x}_{\tau}) \geq \E^{s,x}\left[v^{\eta}\left(\rho \wedge \tau_1, X^{s,x}_{\rho \wedge \tau_1}\right)\bigg|\mathcal{F}^{s,x}_{\tau} \right]\ \mathbb{P}^{s,x}- a.s.
 \end{equation}
Let us again introduce the  notation: $B=\{\rho \leq \tau _1\}\in \mathcal{F}^{s,x}_{\tau _1 \wedge \rho}.$ We know that, on the boundary $ \partial B(t_0, x_0, \varepsilon/2)$, $v=v^\eta.$  Together with the optional sampling theorem applied to the continuous super-martingale $v(\cdot, X^{s,x}_{\cdot})$ between $\tau _1\wedge \rho$ and $\rho$ we have
\begin{eqnarray*}
1_{B^c} v^{\eta}(\tau _1, X^{s,x}_{\tau _1})=
1_{B^c} v (\tau _1, X^{s,x}_{\tau _1})
\geq \mathbb{E}^{s,x}[1_{B^c} v (\rho , X_{\rho}^{s,x})|\mathcal{F}^{s,x}_{\tau _1}]\\
\geq \mathbb{E}[1_{B^c} v ^{\eta}(\rho, X_{\rho}^{s,x})|\mathcal{F}^{s,x}_{\tau _1}],\ \mathbb{P}^{s,x}-a.s.
\end{eqnarray*}
We finally rewrite the RHS in 
\eqref{eq:supermp} as
$$\E^{s,x}\left[v^{\eta}\left(\rho \wedge \tau_1, X^{s,x}_{\rho \wedge \tau_1}\right)\bigg|\mathcal{F}^{s,x}_{\tau} \right]=\E^{s,x}\left[ 1_B v^{\eta}\left(\rho, X^{s,x}_{\rho}\right) 
+ 1_{B^c} v^{\eta}(\tau _1, X^{s,x}_{\tau_1})\bigg|\mathcal{F}^{s,x}_{\tau} \right],
$$
and use the tower property to obtain \begin{equation*}
 v^{\eta}(\tau,X^{s,x}_{\tau}) \geq \E ^{s,x}\left[v^{\eta}\left(\rho, X^{s,x}_{\rho}\right)\bigg|\mathcal{F}^{s,x}_{\tau} \right] \  \mathbb{P}^{s,x}-a.s.
 \end{equation*}
  Since this happens for any stopping times $s\leq \tau \leq\rho \leq T$ of the filtration  $( \mathcal{F}^{s,x}_t) _{s\leq t\leq T}$, we have, indeed, that $v^{\eta}$ is a stochastic super-solution, leading to a contradiction and completing the proof.

 \noindent \textbf{Step 3.} \emph{The boundary condition.}
 
Let $x_0 \in \mathcal{O}$ and $\psi$ be a smooth function on $\mathcal{O}$ such that 
\[
0=v^+(T,x_0)-\psi(x_0)=\max_{\mathcal{O}}(v^+(T,x)-\psi(x)).
\] 
 Assume, in addition, without losing generality, that the maximum is strict.
Let us assume, by contradiction, that 
\begin{equation}\label{eq:cntrpstv}
G(T,x_0,\psi_x(x_0), \psi_{xx}(x_0)) >0 \ \textrm{~and~}v^+(T,x_0)>g(x_0).
\end{equation}
Since $G$ is continuous, and, in addition, $G$ is finite {\it and} continuous in the open  set $G>0$, we conclude that, there exists  small $\varepsilon, \delta _0  >0$ and a finite constant $C$ such that
$$H(t, x, \psi _x (x), \psi _{xx}(x))< C,\ \ T-t\leq \delta _0 , |x-x_0|\leq \varepsilon.$$
In addition, we also have (for small enough $\varepsilon$)
$$\psi (x)\geq g(x)+\varepsilon,\ \ |x-x_0|\leq \varepsilon.$$
Now, the whole idea is based on constructing a local super-solution 
$$\psi^k(t,x)=\psi (x)+k (T-t)$$
for large $k$, {\it by decoupling the bounds $\delta$ and $\varepsilon$ in the estimate above}, then pushing it slightly down. Namely, we will make $\delta $ much smaller than $\varepsilon$. Fix $\delta _0$ and $\varepsilon$ as above. Denote by 
$$h(\delta )=\sup _{T-t\leq \delta , \frac{\varepsilon}2\leq |x-x_0|\leq \varepsilon}\Big (v^+(t,x)-\psi (x)\Big),\ 0<\delta <\delta _0.$$
Interpreting $\psi$ as a continuous function of two variables $(t,x)$, which actually does not depend on $t$ and taking into account that $v^+$ is USC, there exist a point where the maximum above is attained, i.e.
$$h(\delta)=v^+(t_{\delta}, x_{\delta})-\psi (x_{\delta}).$$
By compactness, we can subtract a sub-sequence (we still denote it as $\delta \searrow 0$) such that
$$(t_{\delta}, x_{\delta})\rightarrow (T, x^*),\ \ \frac{\varepsilon}2\leq |x^*-x_0|\leq \varepsilon.$$
Since $v^+$ is USC, we conclude that
\begin{equation}
\begin{split}
\limsup_{\delta \searrow 0} h(\delta)&=\limsup _{\delta \searrow 0} \Big (v^+(t_{\delta}, x_{\delta})-\psi (x_{\delta})\Big)\\
& \leq 
v^+(T, x^*)-\psi (x^*)\leq \sup _{\frac{\varepsilon}2\leq |x-x_0|\leq \varepsilon}\Big (v^+(T,x)-\psi (x)\Big )<0,
\end{split}
\end{equation}
where the last inequality follows from the fact that we have a strict max at $x_0$ and the last supremum is actually attained. Therefore, we can choose $\delta<\delta_0$ small enough such that $h(\delta)<0$.
Now, for {\it this} fixed $\delta$, with the notation
$$\delta '=-h(\delta)>0$$
we have 
\begin{equation}\label{eq:donut}
v^+(t,x)\leq \psi (x)-\delta' ,\ \  T-t\leq \delta , \frac{\varepsilon}2\leq |x-x_0|\leq \varepsilon.
\end{equation}
 Denote by $D$ the compact ``rectangular donut'' 
$$D=\{(t,x)|T-t\leq \delta , |x-x_0|\leq \varepsilon\}-
\{(t,x)|T-t<\delta/2 , |x-x_0|<\varepsilon/2 \}.$$
 Since, by USC, $v^+$ is bounded on $\{\delta /2\leq T-t\leq \delta , |x-x_0|\leq \varepsilon/2\}$ we  can choose $k$ large enough such that 
$$v^+\leq \psi ^k-\delta ' \textrm{~on~} \{\delta /2\leq T-t\leq \delta , |x-x_0|\leq \varepsilon/2\}.$$
Together with \eqref{eq:donut}, we obtain
$$v^+\leq \psi ^k-\delta ' \textrm{~on~}  D.$$
In addition
 \[
 H(t,x,\psi^k _x(t,x),\psi^k _{xx}(t,x))= H(t,x,\psi _x(t,x),\psi _{xx}(t,x)) \leq C,  \quad T-t\leq \delta, |x-x_0|\leq \varepsilon,
 \]
so 
$$
 -\psi^k_t(t,x)- H(t,x,\psi^k_x(t,x),\psi^k_{xx}(t,x)) \geq k- C>0, \;\; \\
$$
for $k$ even larger, if $T-t\leq \delta, |x-x_0|\leq \varepsilon$.
Following the proof of Step 2 in Theorem \ref{thm:mthm1}, we use again Proposition 4.1 in \cite{bs2012a} and  the Dini argument to obtain  a stochastic sub-solution $v \in \mathcal{V}^+$ such that
$v\leq \psi ^k-\delta '/2\ \ \ \textrm{~on~}D.$

Now let $\eta<\delta'/2<\eps$ and define
$$v^k=
\left \{
\begin{array}{ll}
 v\wedge \Big ( \psi ^k -\eta  \Big ),\ \ T-t\leq \delta, |x-x_0|\leq \varepsilon, \\
v, \textrm{~otherwise}.
\end{array}
\right.$$
It follows, using the same stopping argument as in the proof of Theorem \ref{thm:mthm1}, that $v^k \in \mathcal{V}^+$. But we also have that $v^k(T,x_0)=v^+(T,x_0)-\eta <v^+(T,x_0)$, which contradicts the definition of the function $v^+$.

\end{proof}

\section{Verification by comparison}\label{sec:verification}
 Before we go ahead, we recall that our analysis rests on the assumption of the existence of stochastic sub and super-solutions. Such assumption may actually be non-trivial to check, especially given the choice of the gauge function $\psi$ (see Remark \ref{face-lift} below).
\begin{Assumption}\label{ass:elptccmp}
There is a comparison principle between USC sub-solutions and LSC super-solutions within the class
$|w|\leq C\psi$
for the PDE
\begin{equation}\label{eq:bndrypde}
\min[w(x)-g(x),G(T,x,w_x(x), w_{xx}(x))]=0, \quad \text{on} \quad \mathcal{O}.
\end{equation}
\end{Assumption}
\begin{Remark}\label{face-lift}
The choice of $\psi$ can make a difference whether we have or not  a comparison result for \eqref{eq:bndrypde}. As mentioned, we do not have boundary conditions per-se (this carries over to \eqref{eq:bndrypde}), but the  information on behavior of solutions near the boundary might, sometimes, be contained in the choice of $\psi$. Therefore, if one wants, for example, to add a constant to $\psi$, having an easier time checking for the existence of stochastic super-solutions or sub-solutions, uniqueness may be lost in \eqref{eq:bndrypde}.

\end{Remark}

\begin{Lemma}\label{lem:bndrycnd}
Let us suppose that Assumption~\ref{ass:elptccmp}
and assumptions in both Theorem~\ref{thm:mthm1} and Theorem~\ref{thm:mthm2} hold.  Then:

\begin{equation}\label{eq:llql}
v^{-}(T-,\cdot)=v^{+}(T,\cdot)=\hat{g}(\cdot),
\end{equation}
where $\hat{g}$ is the unique continuous viscosity solution of \eqref{eq:bndrypde}. In addition, both the strong and the weak value functions have well defined limits at $T$, equal to the terminal condition $\hat{g}$:
$$\lim _{(t<T, x')\rightarrow (T,x)} V(t,x')=\lim_{(t<T, x')\rightarrow (T,x)}\mathfrak{V}(t,x')=\hat{g}(x),\ x\in \mathcal{O}.$$
\end{Lemma}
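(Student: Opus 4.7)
The plan is to combine the comparison principle for the boundary PDE \eqref{eq:bndrypde} (Assumption \ref{ass:elptccmp}) with the sandwich $v^-\le V\le \mathfrak{V}\le v^+$ valid on $[0,T)\times\mathcal{O}$, which follows from Step 1 of Theorem \ref{thm:mthm1}, Step 1 of Theorem \ref{thm:mthm2}, and the obvious inclusion of strong into weak controls giving $V\le \mathfrak{V}$.

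First I would prove the easy inequality $v^-(T-,\cdot)\le v^+(T,\cdot)$ by taking upper/lower limits. Since $v^-$ is a supremum of continuous functions it is LSC, and since $v^+$ is an infimum of continuous functions it is USC. Fixing $x\in\mathcal{O}$, the pointwise sandwich on $[0,T)\times\mathcal{O}$ combined with these semicontinuities gives
\[
v^-(T-,x)=\liminf_{(t<T,x')\to(T,x)}v^-(t,x')\le \liminf v^+(t,x')\le \limsup v^+(t,x')\le v^+(T,x).
\]
This also supplies the upper growth $v^-(T-,\cdot)\le v^+(T,\cdot)\le C\psi$, where $C$ comes from any fixed element of $\mathcal{V}^+$; the lower bound $v^-(T-,\cdot),\,v^+(T,\cdot)\ge g\ge -C\psi$ follows from $v^+(T,\cdot)\ge g$ built into Definition \ref{def:supersolution}, from $v^-(T,\cdot)=g$ proved in Step 3 of Theorem \ref{thm:mthm1}, and from lower semicontinuity for $v^-(T-,\cdot)$.

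Next I would apply Assumption \ref{ass:elptccmp} to close the loop. By Theorem \ref{thm:mthm2}, $v^+(T,\cdot)$ is a USC viscosity sub-solution of \eqref{eq:bndrypde}; by Theorem \ref{thm:mthm1}, $v^-(T-,\cdot)$ is a LSC viscosity super-solution of the same equation; and both lie in the growth class $|w|\le C\psi$. Comparison therefore forces $v^+(T,\cdot)\le v^-(T-,\cdot)$, and combined with the previous step, $v^+(T,\cdot)=v^-(T-,\cdot)$. This common function is simultaneously USC and LSC, hence continuous, and is both a sub- and super-solution of \eqref{eq:bndrypde}; by uniqueness (again from comparison) it coincides with $\hat g$. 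For the horizon limits of the value functions, I would re-run the same sandwich one last time: for every $x\in\mathcal{O}$,
\[
\hat g(x)=v^-(T-,x)\le \liminf_{(t<T,x')\to(T,x)}V(t,x')\le \limsup_{(t<T,x')\to(T,x)} \mathfrak{V}(t,x')\le v^+(T,x)=\hat g(x),
\]
so all these quantities equal $\hat g(x)$, which is exactly \eqref{eq:llql} and the claimed convergence of $V$ and $\mathfrak{V}$ at $T$.

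The only delicate point I foresee is checking the uniform growth bound $|w|\le C\psi$ required to invoke Assumption \ref{ass:elptccmp}; as indicated above this is handled by sandwiching $v^-(T-,\cdot)$ and $v^+(T,\cdot)$ between a fixed stochastic sub-solution and a fixed stochastic super-solution. Once that technicality is in hand, the proof is a clean combination of the semicontinuity of $v^\pm$, the viscosity results of Theorems \ref{thm:mthm1}--\ref{thm:mthm2}, and one application of comparison.
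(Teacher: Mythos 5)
Your proposal is correct and follows essentially the same route as the paper: the sandwich $v^-\le V\le\mathfrak{V}\le v^+$, semicontinuity of $v^\pm$ to get $v^-(T-,\cdot)\le v^+(T,\cdot)$, the growth bound from nonempty $\mathcal{V}^\pm$, and one application of the comparison from Assumption~\ref{ass:elptccmp} combined with Theorems~\ref{thm:mthm1} and~\ref{thm:mthm2}. The only cosmetic difference is that you spell out the lower growth bound via $v^+(T,\cdot)\ge g$ and $v^-(T-,\cdot)\ge v^-(T,\cdot)=g$, whereas the paper simply invokes $|v^-|,|v^+|\le C\psi$ from the nonemptiness of $\mathcal{V}^\pm$; both are valid.
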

\begin{proof}

It follows from their definitions that $v^- \leq v^+$. Since $v^+$ is USC, then
\begin{equation}
\label{eq:limits}
v^-(T-,x)=\liminf _{(t<T, x')\rightarrow (T,x)} v^-(t,x')\leq \limsup _{(t<T, x')\rightarrow (T,x)}v^+(t,x')\leq v^+(T,x).
\end{equation}
Moreover, $v^{-}(T-,\cdot)$ is a LSC viscosity super-solution of \eqref{eq:bndrypde} as a result of Theorem~\ref{thm:mthm1}, and $v^{+}(T, \cdot)$ is an USC viscosity sub-solution of the same PDE due to Theorem~\ref{thm:mthm2}. 
In addition, under the assumptions that both $\mathcal{V}^-$ and $\mathcal{V}^+$ are non-empty, we have the bounds
$$|v^-|, | v^+|\leq C\psi,$$
obtaining therefore similar growth conditions for $v^+(T,\cdot)$ and $v^-(T-,\cdot)$.
Thanks to the comparison assumption, it follows that $v^{+}(T, \cdot)=v^{-}(T-,\cdot)$ and the common value is the unique continuous viscosity solution of \eqref{eq:bndrypde} that we denote by $\hat{g}$.

In order to prove the second statement, we only need to note that
$$v^-\leq V\leq \mathfrak{V}\leq v^+ $$
and plug the equality $v^-(T-, \cdot)=v^+(T,\dot)=\hat{g}(\cdot)$ in \eqref{eq:limits}.

\end{proof}
\begin{Proposition}\emph{($G$ upper envelope of $g$.)}
Under Assumption \ref{ass:elptccmp}, the function $\hat{g}$
is  the smallest (continuous) function above $g$ which is a viscosity super-solution of 
\begin{equation}\label{eq:layer}
G(T,x,w_x(x), w_{xx}(x))=0, \quad \text{on}\;\; \mathcal{O}.
\end{equation}
\end{Proposition}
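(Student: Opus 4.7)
The plan is to reduce the statement to a direct application of the comparison principle in Assumption~\ref{ass:elptccmp}, after observing that $\hat{g}$ itself has the two required properties. So the argument splits naturally into a verification step and a minimality step.

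First I would check that $\hat{g}$ qualifies as a candidate. By Lemma~\ref{lem:bndrycnd}, $\hat{g}$ is continuous and is a viscosity solution of the combined boundary equation \eqref{eq:bndrypde}. From the definition of the min-operator, being a viscosity super-solution of $\min[w-g,G]=0$ implies pointwise that $\hat{g}(x)\geq g(x)$ and, in the viscosity sense, $G(T,x,\hat{g}_x,\hat{g}_{xx})\geq 0$. Thus $\hat{g}$ is a continuous function lying above $g$ which is a viscosity super-solution of \eqref{eq:layer}.

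For the minimality step, I would take an arbitrary continuous function $w$ with $w\geq g$ on $\mathcal{O}$ that is a viscosity super-solution of \eqref{eq:layer}, and show $\hat{g}\leq w$. The key observation is that such a $w$ is automatically a viscosity super-solution of the full boundary PDE \eqref{eq:bndrypde}: at any test function touching from below, both $w-g\geq 0$ (pointwise) and $G\geq 0$ (viscosity) hold, so their minimum is nonnegative. On the other hand, $\hat{g}$, being a viscosity solution of \eqref{eq:bndrypde}, is a (USC) viscosity sub-solution of the same equation. Assumption~\ref{ass:elptccmp} then yields $\hat{g}\leq w$, provided $w$ lies in the admissible growth class $|w|\leq C\psi$; if the statement is implicitly quantified over candidates in this class (which is the natural setting, given that $\hat{g}$ itself has this growth), then minimality follows at once.

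The main obstacle is the growth issue just mentioned: for the comparison principle to apply one needs $w$ to satisfy the bound $|w|\leq C\psi$, and this has to be either assumed as part of the class over which we minimize, or verified independently. The rest is essentially bookkeeping with the definition of viscosity super-solution of a min-equation. No further probabilistic input is needed at this stage, since all the heavy lifting has already been done in Theorems~\ref{thm:mthm1} and \ref{thm:mthm2} and in Lemma~\ref{lem:bndrycnd}.
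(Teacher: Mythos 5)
Your proof follows essentially the same route as the paper: verify that $\hat{g}$ is itself a continuous function above $g$ that is a viscosity super-solution of \eqref{eq:layer}, then observe that any competitor $w\geq g$ super-solving \eqref{eq:layer} is automatically a super-solution of \eqref{eq:bndrypde}, and invoke the comparison in Assumption~\ref{ass:elptccmp} against the sub-solution $\hat{g}$. Your remark about the growth restriction $|w|\leq C\psi$ is a valid and worthwhile caveat that the paper's terse proof glosses over: the comparison principle is only asserted within that class, so minimality should indeed be read as minimality among candidates satisfying the gauge bound.
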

\begin{proof} We know that $\hat{g}\geq g$ and that $\hat{g}$ is a viscosity super-solution of \eqref{eq:layer}. Consider now a $w\geq g$ and $w$ is a super-solution of \eqref{eq:layer}. Then, $w$ is a super-solution of \eqref{eq:bndrypde}. Since $\hat{g}$ is a solution of \eqref{eq:bndrypde} and we have a comparison result, then $\hat{g}\leq w$.

\end{proof}
\begin{Remark}
When the space of controls is compact, one may take $G$ to be equal to a positive constant. In that case $g=\hat{g}$.
\end{Remark}

\begin{Definition}
We say that a comparison principle for \eqref{eq:PDE} holds if, whenever we have an upper semi-continuous viscosity sub-solution $v$, and a lower semi-continuous viscosity super-solution $w$ satisfying growth conditions
$|v|, |w|\leq C(1+\psi)$
 with $v(T,\cdot) \leq w(T,\cdot)$ on $\mathcal{O}$, then $v \leq w$.
\end{Definition}

\begin{Remark} One cannot expect comparison up to time $t=0$ for semi-continuous viscosity semi-solutions, unless the viscosity property holds in the whole parabolic interior, which includes $t=0$. This can be seen, for example, from \cite{CIL} and \cite{dfo}. The reader may note that we did prove the viscosity semi-solution property for $v^-$ and $v^+$ in the parabolic interior.
\end{Remark}

Now we are ready to state the main result of this section, which follows as a corollary of Theorems~\ref{thm:mthm1} and \ref{thm:mthm2} and Lemma~\ref{lem:bndrycnd}.
\begin{Theorem}\label{thm:main-comp}
Let us assume that a comparison principle for \eqref{eq:PDE} holds. Moreover, we assume that Assumption~\ref{ass:elptccmp}
and assumptions in both Theorem~\ref{thm:mthm1} and Theorem~\ref{thm:mthm2} hold. Then, there exists a unique continuous (up to $T$) viscosity solution $v\in C([0,T]\times \mathcal{O})$ of the PDE \eqref{eq:PDE} with terminal condition $v(T,\cdot)=\hat{g}(\cdot)$, satisfying the growth condition $|v|\leq C \psi$.
Before time $T$ we have:

\[
v(t,x)=v^{-}(t,x)=v^{+}(t,x)=V(t,x) = \mathfrak{V}(t,x) \quad (t,x) \in [0,T) \times \mathcal{O}.
\]
\end{Theorem}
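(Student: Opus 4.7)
The plan is to close the sandwich $v^{-}\leq V\leq\mathfrak{V}\leq v^{+}$ by invoking the comparison principle once the two terminal traces have been identified. The inequality $v^{-}\leq V$ was proved in Step~1 of Theorem~\ref{thm:mthm1}, the inequality $\mathfrak{V}\leq v^{+}$ was proved in Step~1 of Theorem~\ref{thm:mthm2}, and the middle inequality $V\leq\mathfrak{V}$ is a triviality since every strong control induces a weak control with the same expected pay-off. Hence everything reduces to proving $v^{+}\leq v^{-}$ on $[0,T)\times\mathcal{O}$.

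To apply the comparison principle assumed in the statement, I need a USC viscosity sub-solution and a LSC viscosity super-solution of \eqref{eq:PDE} in the parabolic interior, whose traces at $T$ satisfy the right inequality, and which both have the growth $|\cdot|\leq C\psi$. Theorem~\ref{thm:mthm2} gives that $v^{+}$ is USC and a viscosity sub-solution of \eqref{eq:PDE}; Theorem~\ref{thm:mthm1} gives that $v^{-}$ is LSC and a viscosity super-solution of \eqref{eq:PDE}. The growth bounds come from the fact that $\mathcal{V}^{\pm}$ is non-empty together with the growth condition built into Definitions~\ref{defn:stsubsoln} and~\ref{def:supersolution}, giving $|v^{-}|,|v^{+}|\leq C\psi$. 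The key point that connects the two is Lemma~\ref{lem:bndrycnd}: it tells us that the modified trace $v^{-}(T-,\cdot)$ and the trace $v^{+}(T,\cdot)$ both equal the same continuous boundary function $\hat{g}$. Therefore, if we replace the value of $v^{-}$ at $t=T$ by $v^{-}(T-,\cdot)=\hat g$, we obtain a LSC super-solution on $[0,T]\times\mathcal{O}$ whose terminal value agrees with $v^{+}(T,\cdot)=\hat{g}$; this is precisely the configuration the comparison hypothesis is designed to treat, and it delivers $v^{+}\leq v^{-}$ on $[0,T)\times\mathcal{O}$.

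Once equality of the four functions is established on $[0,T)\times\mathcal{O}$, the common value is simultaneously USC (because it equals $v^{+}$) and LSC (because it equals $v^{-}$), hence continuous on $[0,T)\times\mathcal{O}$. Combining this with the second statement of Lemma~\ref{lem:bndrycnd}, which yields $\lim_{(t<T,x')\to(T,x)}V(t,x')=\hat{g}(x)$, the common function extends continuously to $[0,T]\times\mathcal{O}$ with terminal trace $\hat{g}$. Uniqueness of this continuous viscosity solution within the class $|v|\leq C\psi$ is immediate from the comparison principle applied to any two such solutions (both directions of the inequality).

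The only genuinely delicate point is the handling of the terminal time: the PDE \eqref{eq:PDE} is posed on the parabolic interior $[0,T)\times\mathcal{O}$, while the traces produced by Theorems~\ref{thm:mthm1} and~\ref{thm:mthm2} are of two different kinds ($v^{-}(T-,\cdot)$ defined by lower limits from $t<T$ versus $v^{+}(T,\cdot)$ taken directly). Lemma~\ref{lem:bndrycnd} does exactly the work of reconciling them via the elliptic comparison principle for \eqref{eq:bndrypde}; without it, the comparison assumption for \eqref{eq:PDE} could not be invoked, because one would not know that the terminal conditions of $v^{-}$ and $v^{+}$ are ordered. Everything else is bookkeeping.
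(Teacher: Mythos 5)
Your proposal is correct and follows essentially the same route as the paper: you invoke Lemma~\ref{lem:bndrycnd} to identify the traces $v^{-}(T-,\cdot)=v^{+}(T,\cdot)=\hat{g}$, then replace the value of $v^{-}$ at $t=T$ by $\hat{g}$ (this is the paper's auxiliary function $w$, which remains LSC and is still a viscosity super-solution on the parabolic interior), and apply the parabolic comparison principle to the pair $(v^{+},w)$ to close the sandwich $v^{-}\leq V\leq\mathfrak{V}\leq v^{+}$. The only cosmetic difference is how you obtain continuity up to $T$: the paper reads it off directly from the identity $v^{+}=w$ (a USC function equal to an LSC function is continuous), while you first get continuity on $[0,T)\times\mathcal{O}$ and then invoke the limit statement in Lemma~\ref{lem:bndrycnd}; both are valid and yield the same conclusion.
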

\begin{proof}
Since Assumption~\ref{ass:elptccmp} holds, then 
$v^-(T-, \cdot)=v^+(T, \cdot)=\hat{g}(x).$
We now define the (still LSC) function 
$$w(t,x)=
\left \{
\begin{array}{ll}
v^-(t,x),\ \ 0\leq t<T, x\in \mathcal{O}\\
\hat{g}(x),\ \ t=T, x\in \mathcal{O}.
\end{array}\right.$$
By definition, $w\leq v^+$. At the same time,
the function $w$ is  a LSC viscosity super-solution and $v^+$ is a  USC viscosity sub-solution of \eqref{eq:PDE}. Since
$v^+(T \cdot)=w(T, \cdot)$ 
 we can use comparison to conclude that $v^+\leq w$, so
$$v^+=w\in  C([0,T]\times \mathcal{O}).$$
Denoting by 
$v=w=v^+$, the proof is complete.
\end{proof}
\begin{Remark}
When the controls are unbounded, the value function may display a discontinuity at the terminal time $T$, as  we expect that $v(T-, \cdot)=\hat{g}$ and  $v(T,\cdot)=g$. (If $t \neq T$, it follows from the above theorem that the value function is continuous.) The discontinuity was already observed by Krylov in \cite{MR2723141} on page 252, but the question of what the correct boundary condition should be was left open. 
For a particular model  of super-hedging, an answer was given in \cite{bcs}.  The technical arguments  to treat such behavior close to the final time horizon were extended to more general models of super-hedging in \cite{cpt}, \cite{st}. A summary of such arguments can also be found in  \cite{st-lnm} or in the textbook \cite{MR2533355}.
One of our contributions is to show that this boundary condition holds without relying on the DPP. The proof of the boundary condition comes out as a simple conclusion from the Stochastic Perron method.
\end{Remark}
\begin{Remark}\label{rem:fleming-vermes} (Fleming-Vermes)
As we have mentioned in the introduction, using our notation, Fleming-Vermes \cite{fleming-vermes} and \cite{fleming-vermes-2} prove that (with the notation \eqref{value-weak}) we have 
$$V=\mathfrak{v}=\inf \{\textrm{classic super-solutions}\},$$
under some technical assumptions (in particular, there is no boundary layer).
The proof uses a sophisticated approximation/separation argument, and the probabilistic representation of $V$, $\mathfrak{v}$. 




\end{Remark}
 The program we propose in the present paper  can be summarized as 

\vspace{.1in}

\noindent \fbox{Theorem \ref{thm:mthm1}+  Theorem \ref{thm:mthm2}+Comparison $\rightarrow V=\mathfrak{v}=$ unique viscosity solution}

\vspace{.1in}

However, in the absence of a comparison result for semi-continuous viscosity solutions, little can actually be said about the properties of the value function, following this approach.

\bibliographystyle{amsplain}

\def\cprime{$'$}
\providecommand{\bysame}{\leavevmode\hbox to3em{\hrulefill}\thinspace}
\providecommand{\MR}{\relax\ifhmode\unskip\space\fi MR }
\providecommand{\MRhref}[2]{%
  \href{http://www.ams.org/mathscinet-getitem?mr=#1}{#2}
}
\providecommand{\href}[2]{#2}

 \end{document}